\documentclass[pdflatex,sn-mathphys-num]{sn-jnl-arxiv}

\usepackage[T1]{fontenc}
\usepackage{amsmath,amssymb,amsthm,mathtools}
\usepackage{booktabs,array,longtable}
\usepackage{enumitem}
\usepackage{xcolor}
\usepackage{hyperref}
\usepackage{graphicx}
\usepackage{CJKutf8} % Retained for compatibility with the v1 bibliography and diff.
\makeatletter
\ifdefined\pdfnobuiltintounicode
  \AddToHook{cmd/CJK@addcmap/after}{\expandafter\pdfnobuiltintounicode\font@name}
\fi
\makeatother

\hypersetup{colorlinks=true,linkcolor=blue!50!black,citecolor=blue!50!black,urlcolor=blue!50!black}
\allowdisplaybreaks[4]

\theoremstyle{thmstyleone}
\newtheorem{theorem}{Theorem}[section]
\newtheorem{maintheorem}{Theorem}

\newtheorem{proposition}[theorem]{Proposition}
\newtheorem{lemma}[theorem]{Lemma}
\newtheorem{corollary}[theorem]{Corollary}
\theoremstyle{thmstylethree}

\theoremstyle{thmstyletwo}
\newtheorem{remark}[theorem]{Remark}

\DeclareMathOperator{\Area}{Area}
\DeclareMathOperator{\Norm}{N}
\newcommand{\Q}{\mathbb{Q}}
\newcommand{\Z}{\mathbb{Z}}

\begin{document}

\title[Rational Area Profiles of Regular Polygons]{Rational Area Profiles of Regular Polygons Cut by Two Congruent Diagonals}

\author{\fnm{Shoichi} \sur{Inoue}}
\email{shoichi31415@gmail.com}
\author{\fnm{Keiichi} \sur{Watanabe}}
\email{keiichi@keiichiw.dev}
% Affiliations and corresponding-author designation have not been supplied.

\abstract{Two congruent diagonals of a regular hexagon cut it into four regions whose areas are all rational multiples of the area of the hexagon. We determine when this phenomenon occurs in a regular $n$-gon. For every regular polygon cut by two intersecting congruent diagonals, neither of which is a diameter, we classify all unions of the four resulting regions whose areas are rational fractions of the total area. The hexagon is the only configuration in which all four regions are simultaneously rational. One of the two regions on the axis of symmetry is rational in exactly five sporadic configurations, with $n=6,18,30$, whereas the other region and the adjacent two-region unions belong to infinite families with simple geometric explanations. The two lateral regions have rational area ratios in exactly two configurations, with $n=6$ and $n=12$. The proof reduces each area ratio to a trigonometric quotient, bounds its rational values by norms in cyclotomic fields, and settles the remaining equations using Myerson's classification of equal products of two sines and the Poonen--Rubinstein classification of a three-sine-product equation.}

\keywords{Regular polygons, Area ratios, Congruent diagonals, Cyclotomic fields, Trigonometric Diophantine equations, Roots of unity}

% Suggested MSC 2020 codes; confirm before submission.
\pacs[MSC Classification]{51M04, 51M25, 11R18}

\maketitle
\section{Introduction}\label{sec:introduction}

Two congruent diagonals of a regular hexagon can cut it into four regions whose areas, as fractions of the whole, are
\[
  \frac1{18},\qquad \frac19,\qquad \frac{13}{18},\qquad \frac19
\]
(Figure~\ref{fig:hexagon}). Is this simultaneous rationality a feature of regular polygons, or an isolated configuration? More generally, which individual regions and which unions of regions have rational area ratios? We classify every such union for the configurations considered below. The hexagon is the unique configuration for which all four individual ratios are rational, but some individual regions and adjacent pairs belong to infinite families.

\begin{figure}[htb]
  \centering
  \includegraphics{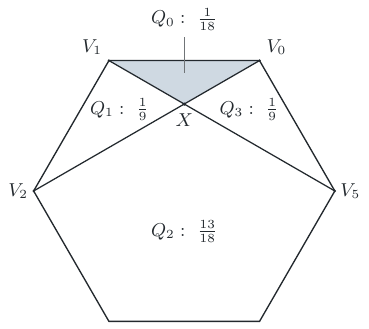}
  \caption{The configuration $(n,k,m)=(6,1,2)$. The indicated areas are normalized by the area of the hexagon.}
  \label{fig:hexagon}
\end{figure}

Let $P_n$ be a regular $n$-gon with centre $O$ and circumradius $1$, with vertices $V_0,\ldots,V_{n-1}$ in counterclockwise order. Throughout, an \emph{admissible configuration} is an integer triple $(n,k,m)$ satisfying
\[
  n\ge5,\qquad 2\le m<\frac n2,\qquad 1\le k<m.
\]
The strict inequality $m<n/2$ excludes diameters.
Draw the congruent diagonals $d_1=V_0V_m$ and $d_2=V_kV_{n-m+k}$, and call their intersection $X$. In cyclic order, let $Q_0,Q_1,Q_2,Q_3$ be the regions adjoining the boundary arcs from $V_0$ to $V_k$, from $V_k$ to $V_m$, from $V_m$ to $V_{n-m+k}$, and from $V_{n-m+k}$ to $V_0$, respectively. With $l=n-2m$ and $t=m-k$, these arcs contain $k,t,k+l,t$ edges; Figure~\ref{fig:configuration} records the notation. Write
\[
  a_i=\frac{\Area(Q_i)}{\Area(P_n)},\qquad
  A_I=\sum_{i\in I}a_i\quad (I\subseteq\{0,1,2,3\}).
\]
Thus $a_0+a_1+a_2+a_3=1$. Reflection interchanges $Q_1$ and $Q_3$, so $a_1=a_3$.

\begin{maintheorem}[Complete rationality profile]\label{thm:profile}
For every admissible configuration, the rational values of $a_0$, $a_2$, $a_1=a_3$, and $a_0+a_1$ occur exactly in the following table. The sets of configurations in its four blocks are denoted by $\mathcal E_0$, $\mathcal E_2$, $\mathcal E_1$, and $\mathcal E_{\mathrm{adj}}$, respectively.
\begin{center}
\small
\begin{tabular}{@{}lll@{}}
\toprule
Area ratio & Configuration $(n,k,m)$ & Value\\
\midrule
$a_0$ \;($\mathcal E_0$) & $(6,1,2)$ & $1/18$\\
 & $(18,3,5)$ & $1/18$\\
 & $(18,4,6)$ & $1/9$\\
 & $(30,3,10)$ & $1/30$\\
 & $(30,5,12)$ & $1/10$\\
\midrule
$a_2$ \;($\mathcal E_2$) & $(6,1,2)$ & $13/18$\\
 & $(9,1,3)$ & $2/3$\\
 & $(n,n/2-3,n/2-2)$, even $n\ge8$ & $1/2+3/n$\\
\midrule
$a_1=a_3$ \;($\mathcal E_1$) & $(6,1,2)$ & $1/9$\\
 & $(12,1,4)$ & $1/6$\\
\midrule
$a_0+a_1$ \;($\mathcal E_{\mathrm{adj}}$) & $(n,k,n/2-1)$, even $n\ge6$ & $1/2-2/n$\\
 & $(12,1,3)$, $(12,2,3)$ & $1/12$\\
\bottomrule
\end{tabular}
\end{center}
The other adjacent sums satisfy
\[
  a_0+a_3=a_0+a_1,\qquad
  a_1+a_2=a_2+a_3=1-(a_0+a_1),
\]
and the opposite sums satisfy
\[
  a_1+a_3=2a_1,\qquad a_0+a_2=1-2a_1.
\]
Consequently, adjacent sums are rational exactly on $\mathcal E_{\mathrm{adj}}$, and opposite sums exactly on $\mathcal E_1$. A three-region sum is rational exactly when its complementary single region is rational. The empty and full sums are always $0$ and $1$. These statements classify $A_I$ for every $I$.
\end{maintheorem}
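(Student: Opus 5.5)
Our plan is to write every area ratio as an explicit trigonometric quotient, use cyclotomic norms to restrict the possible rational values, and then solve the resulting finitely many types of trigonometric equations.

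\textbf{Step 1: coordinates and area formulas.} Put $V_j=e^{2\pi i j/n}$ and $\theta=\pi/n$. Compute $X$ as the intersection of the two chords. By the reflection symmetry $X$ lies on the axis through the midpoint of the arc from $V_0$ to $V_k$. Its distance from $O$ along that axis is
\[
  \rho=\frac{\cos(m\theta)}{\cos((m-k)\theta)}\quad\text{(up to sign)},
\]
which comes from intersecting a chord of half-angle $m\theta$ with the axis. Each region $Q_i$ is then a union of triangles $OV_jV_{j+1}$ (a polygonal sector), corrected by the triangles $OV_0X$, $OXV_k$ and so on with signs. This expresses $\Area(Q_i)$ as a combination of $\sin(2j\theta)$ and of products like $\rho\sin(\cdot)$. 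Dividing by $\Area(P_n)=\tfrac n2\sin 2\theta$ gives
\[
  a_i=\frac{c_i}{n}+\frac{R_i}{n\sin 2\theta},
\]
where $c_i$ is the number of edges on the corresponding arc and $R_i$ is an explicit quotient of sine and cosine products.

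For the lateral regions and adjacent sums the formulas simplify. The line $V_0V_m$ cuts $P_n$ into two pieces with explicit areas. So $a_0+a_1$ is the area ratio of a segment cut off by one chord, namely $m/n-\sin(2m\theta)/(n\sin 2\theta)$ up to orientation. This gives the identities $a_0+a_1=a_0+a_3$ and $a_1+a_2=1-(a_0+a_1)$ at once, and the opposite-sum identities follow from $a_1=a_3$. Thus the closing sentences of the statement are formal consequences of the four classifications.

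\textbf{Step 2: norm bounds.} Suppose a ratio $r$ is rational. Then $R=(nr-c)\sin 2\theta$ lies in $\Q\cdot\sin 2\theta$. Rewrite the relation as an identity in $\Q(\zeta_{2n})$ between products of sines. Taking absolute norms, with $\Norm(1-\zeta)$ known for roots of unity (equal to $p$ or $1$), bounds the denominator and numerator of $nr-c$. This forces the rational value to lie in a small explicit set, and it forces $n$ to be even or to have restricted prime factors.

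For $a_0+a_1$ the condition becomes $\sin(2m\theta)/\sin(2\theta)\in\Q$. A quotient of two sines at roots of unity is rational only when it equals $\pm1$, $\pm2$ or $\pm\tfrac12$ with small conductor. Hence $\sin 2m\theta=\sin 2\theta$, which gives $m=n/2-1$, plus the $n=12$ coincidences.

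\textbf{Step 3: reduction to known classifications.} With the rational value pinned down, each remaining condition becomes an equation $\sin\alpha\sin\beta=q\sin\gamma\sin\delta$ with $q$ forced to be $1$ (or $\tfrac12,2$ via $\sin\pi/6$). This is an equality of two products of two sines, which Myerson's classification solves. Reading off the parametric families gives the even-$n$ families for $a_2$ and $a_0+a_1$; the sporadic solutions give $(9,1,3)$ and $(12,1,4)$.

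The condition for $a_0$ involves $\rho$, whose denominator is $\cos((m-k)\theta)$. It therefore yields a three-sine-product equation
\[
  \sin\alpha\sin\beta\sin\gamma=\sin\alpha'\sin\beta'\sin\gamma'\cdot q ,
\]
and the Poonen--Rubinstein classification produces exactly the five sporadic cases at $n=6,18,30$.

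The \textbf{main obstacle} is Step 3 for $a_0$. One must check that the norm argument of Step 2 really forces the equation into the precise shape treated by Poonen--Rubinstein, with no additive rational term left over. Then one must go through their sporadic list and discard the degenerate (trivial-family) solutions that violate admissibility, $1\le k<m<n/2$. I would first clear the additive term $c_0/n$ using the norm bound to show $nr=c_0$ is impossible unless the $\rho$-term is rational. Finally, I would verify each surviving configuration numerically against the table.
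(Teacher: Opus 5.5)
Your overall architecture (area reduction, cyclotomic norms, then classifications of equal sine products) is the same as the paper's. However, two concrete claims in it are wrong, and executed as written the plan would not reproduce the table.

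First, the norm step does not leave only $q\in\{1,\tfrac12,2\}$. Two factors coming from primitive cube roots of unity give $v_3(q)=\pm1$, and this happens precisely for the hexagon. With $\theta=\pi/(2n)$ (used from now on), the quotients at $(6,1,2)$ are $R=\sin^2(\pi/6)/\sin^2(\pi/3)=1/3$, $S=2/3$ and $T=1/3$. Your restriction would therefore delete $(6,1,2)$ from $\mathcal E_0$, $\mathcal E_2$ and $\mathcal E_1$. This configuration does not arise from any equation with $q=1$, so you need the $p=3$ bookkeeping of Lemma~\ref{lem:norm-restriction}(iii) to find it. Also, some values other than $1$ genuinely survive the norm test: $2$ for $S$, and $2$ and $3$ for $T$, whose numerator has the repeated factor $(1-\zeta^B)^2$. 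These cannot be handed to Myerson by writing $2=1/\sin(\pi/6)$, since that introduces a third sine. The paper instead shows that these values force $l=2t$, respectively $B=n/2$ or $B=2n/3$. This collapses the equation to $\tan(r\pi/n)=2\sin(2\pi/n)$, which is excluded by the elementary inequality of Lemma~\ref{lem:tan-obstruction}.

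Second, you have attached the two classification theorems to the wrong regions. Your own decomposition gives $\Area(Q_0)=\tfrac k2\sin\tfrac{2\pi}{n}-\rho\sin\tfrac{k\pi}{n}$ with $\rho=\cos(m\pi/n)/\cos((m-k)\pi/n)$. So the cosine in the denominator of $\rho$ does not create a third sine. After the norm step the condition is the two-by-two equation $\sin(2k\theta)\sin(l\theta)=\sin(4\theta)\sin((2k+l)\theta)$, whose normalized angles satisfy $d=a+b$, and Myerson settles it:
\begin{itemize}
\item the one-parameter family at $\sigma=2/9$ gives the two $n=18$ cases;
\item row~12 of Table~\ref{tab:myerson} gives the two $n=30$ cases;
\item row~13 fails by parity.
\end{itemize}

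The lateral region is where three sines are unavoidable. $T=1$ reads $\sin^2(B\theta)\cos(C\theta)=\sin(2\theta)\cos(2\theta)\sin(C\theta)$, which is not an equality of two products of two sines, so Myerson cannot yield $(12,1,4)$. Nor can this form be fed to Poonen--Rubinstein, whose theorem needs the six angles to sum to $1$; here they sum to $1+B/n$. This is where the obstacle you flag really lives. The missing idea is the identity
\[
 \sin(x-z)\sin x\cos(y+z)-\cos(x-z)\sin z\sin(y-x)=\sin^2x\cos y-\sin z\cos z\sin y
\]
with $x=B\theta$, $y=C\theta$, $z=2\theta$. It rewrites $T=1$ with angles $\frac{B-2}{2n},\frac{B}{2n},\frac{n-C-2}{2n}$ and $\frac{n-B+2}{2n},\frac1n,\frac{C-B}{2n}$, which sum to $1$. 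First exclude $B\le2$ and $n-C=2$ by inequalities, so that all six angles are positive, and rule out trivial solutions. Matching against their Tables~3 and~4 then leaves only $(n,B,C)=(12,4,6)$, from the first family.

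Smaller slips:
\begin{itemize}
\item $(9,1,3)$ comes from Myerson's one-parameter family ($\sigma=4/9$), not the sporadic table.
\item The even-$n$ family for $a_2$ comes from the trivial solutions $a=c$, $b=d$.
\item The additive term $c_0/n$ is already rational, so no norm argument is needed to remove it.
\end{itemize}
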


The main distinction is between the five isolated configurations for $Q_0$ and the infinite families for $Q_2$ and adjacent pairs. Figure~\ref{fig:sporadic-q0} shows all five configurations in $\mathcal E_0$. The latter families have direct geometric explanations: the adjacent union is a half-polygon with one triangle removed, while the $Q_2$ family comes from reflecting a central triangle in a chord (Section~\ref{sec:profile-proof}). This separation into sporadic configurations and one-parameter families also occurs in Poonen--Rubinstein's classification of a three-sine-product equation arising from concurrent polygon diagonals~\cite{PoonenRubinstein1998}. The two classifications address different geometric questions; their connection enters the proof for the lateral regions.

\begin{figure}[!tp]
  \centering
  \includegraphics{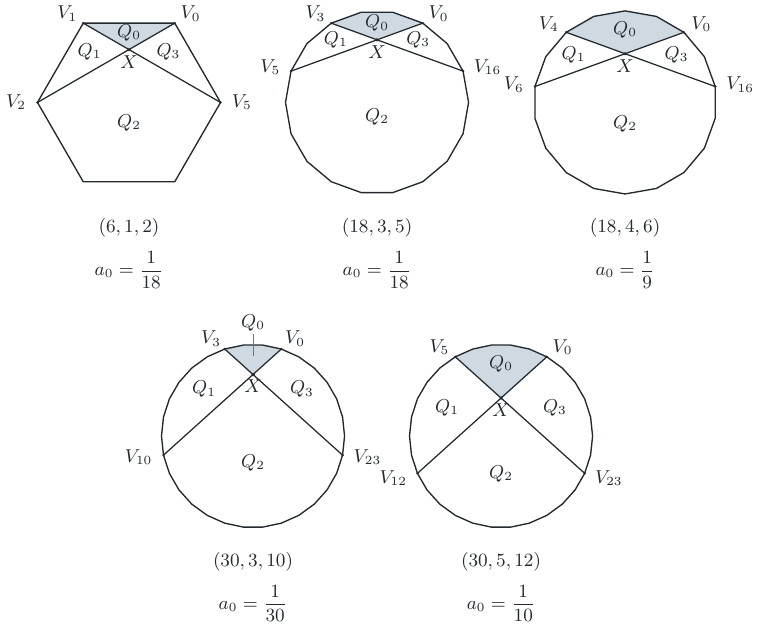}
  \caption{All five configurations with rational $a_0$. In each panel, the reflection axis is vertical and $Q_0$ is shaded. The labels below give $(n,k,m)$ and $a_0=\Area(Q_0)/\Area(P_n)$.}
  \label{fig:sporadic-q0}
\end{figure}

An example involving a regular $18$-gon is discussed by Araki~\cite{Araki2022}. Related work on diagonal lengths provides an arithmetic context for our area problem. Lang~\cite{Lang2012} studies diagonal-to-side length ratios through Chebyshev polynomials and the field $\Q(2\cos(\pi/n))$. Grubb--Wolird~\cite{GrubbWoll2021} study ratios of polygon diagonals using roots of unity, including the degrees of the number fields generated by these ratios. In our simplest case, the adjacent-area problem reduces directly to a quotient of two chord lengths; the other regions require products of such trigonometric quantities.

The proof follows a common pattern. Area decompositions reduce the relevant area-rationality questions to the rationality of certain trigonometric expressions. Cyclotomic norms then restrict the possible rational values to one and a small number of explicitly controlled exceptional cases. The remaining equations are treated using elementary arguments, Myerson's classification of equal products of two sines~\cite{Myerson1993}, and the Poonen--Rubinstein classification of a three-sine-product equation~\cite{PoonenRubinstein1998}. This progression makes explicit the additional arithmetic input required for each geometric quantity.

Section~\ref{sec:geometry} gives the geometric formulas and the notation used throughout. Section~\ref{sec:tools} collects the arithmetic tools. Sections~\ref{sec:adjacent}--\ref{sec:lateral} prove the four component classifications, in the order adjacent sums, $Q_2$, $Q_0$, and lateral regions. Section~\ref{sec:profile-proof} assembles Theorem~\ref{thm:profile} and explains the infinite families geometrically. The final section records further questions, and Appendix~\ref{app:finite-matching} gives the finite comparison used in the classification of the lateral regions.
\section{Regular polygons cut by two congruent diagonals}\label{sec:geometry}

We retain the notation introduced in Section~\ref{sec:introduction}.

\subsection{The four regions}

The admissibility conditions give
\[
  0<k<m<n-m+k<n.
\]
Hence on the circle
\[
  V_0,\quad V_k,\quad V_m,\quad V_{n-m+k}
\]
appear in this order. Since the endpoints of $d_1$ and $d_2$ alternate around the circle, the two chords meet in the interior of the convex polygon $P_n$.
Figure~\ref{fig:configuration} illustrates the construction for the case
$(n,k,m)=(14,3,5)$.

\begin{figure}[htb]
  \centering
  \includegraphics{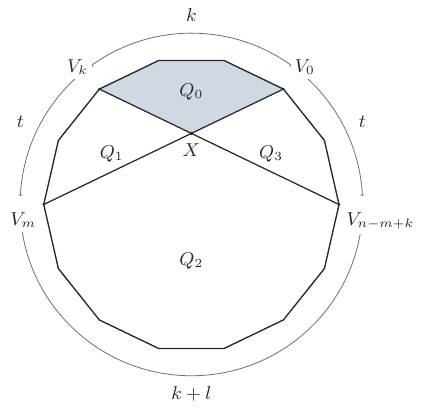}
  \caption{The configuration $(14,3,5)$. The four boundary arcs contain $k,t,k+l,t$ edges, respectively; the diagonals meet at $X$.}
  \label{fig:configuration}
\end{figure}

\subsection{Basic variables}

Set $\theta:=\pi/(2n)$. For the parameters $l,t$ introduced above, admissibility gives
\[
  l\ge1,
  \qquad
  t\ge1,
  \qquad
  l+2t\le n-2,
  \qquad
  n-l\equiv0\pmod 2.
\]
Conversely, from any $(n,l,t)$ satisfying these, the original $(n,k,m)$ is recovered by
\[
  m=\frac{n-l}{2},
  \qquad
  k=m-t=\frac{n-l-2t}{2}.
\]
Also
\[
  2k+l=n-2t.
\]

The following dictionary fixes both the geometric parameters and their arithmetic aliases. Arc lengths here mean numbers of polygon edges.
\begin{center}
\small
\begin{tabular}{@{}lll@{}}
\toprule
Symbol & Definition & Geometric meaning\\
\midrule
$m$ & $2\le m<n/2$ & Minor-arc length of either diagonal\\
$k$ & $1\le k<m$ & Boundary-arc length of $Q_0$\\
$l$ & $n-2m$ & Excess of the $Q_2$ arc over the $Q_0$ arc\\
$t$ & $m-k$ & Boundary-arc length of either lateral region\\
$A$ & $2k$ & Twice the $Q_0$ arc length\\
$B$ & $l$ & Alias for the arc-length excess\\
$C$ & $A+B=n-2t$ & Sum of the $Q_0$ and $Q_2$ arc lengths\\
$\theta$ & $\pi/(2n)$ & One quarter of the central angle\\
\bottomrule
\end{tabular}
\end{center}

\subsection{Symmetry}

\begin{proposition}[Symmetry of the lateral regions]\label{prop:a1a3}
In the above setup,
\[
  a_1=a_3.
\]
\end{proposition}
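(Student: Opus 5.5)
We will exhibit a reflection of the plane that preserves $P_n$, swaps the two diagonals, and carries $Q_1$ onto $Q_3$. Since reflections preserve area, this gives $\Area(Q_1)=\Area(Q_3)$, and dividing by $\Area(P_n)$ yields $a_1=a_3$.

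\emph{Step 1 (the reflection).} Let $\rho$ be the reflection of the plane in the line through $O$ that bisects the arc from $V_m$ to $V_{n-m+k}$. In terms of vertex indices it acts by $V_j\mapsto V_{(n+k)-j}$, indices taken modulo $n$. This is a symmetry of $P_n$, because $j\mapsto c-j$ is a symmetry of the regular $n$-gon for any integer $c$. We check its action on the four relevant vertices:
\[
  V_0\mapsto V_{n+k}=V_k,\qquad V_k\mapsto V_n=V_0,\qquad V_m\mapsto V_{n-m+k},\qquad V_{n-m+k}\mapsto V_m .
\]
Hence $\rho(d_1)=\rho(V_0V_m)=V_kV_{n-m+k}=d_2$, and likewise $\rho(d_2)=d_1$. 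It follows that $\rho$ fixes the intersection point $X$, so $X$ lies on the axis of $\rho$. The axis also passes through the midpoints of the arcs $V_0V_k$ and $V_mV_{n-m+k}$, so it is the axis of symmetry referred to in the introduction.

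\emph{Step 2 (matching the regions).} Each region $Q_i$ is bounded by a boundary arc of $P_n$ together with the two segments joining the arc's endpoints to $X$. In particular, $Q_1$ is the polygon with vertices $X, V_k, V_{k+1},\dots,V_m$. Applying $\rho$ sends $V_k\mapsto V_0$ and $V_m\mapsto V_{n-m+k}$, and it reverses orientation. So it maps the boundary arc from $V_k$ to $V_m$ onto the boundary arc from $V_{n-m+k}$ to $V_0$, traversed backwards. Since $\rho(X)=X$, we conclude $\rho(Q_1)=Q_3$.

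The only point requiring care is to confirm that the regions are exactly these arc-plus-$X$ polygons. This follows from the alternation $0<k<m<n-m+k<n$ established above, which shows the diagonals cross at an interior point $X$ and cut $P_n$ into four pieces, one for each arc. Once that is in place, the argument is routine and we expect no real obstacle.
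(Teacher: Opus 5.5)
Your proposal is correct and follows the same route as the paper: the paper reflects in the line of angle $k\pi/n$, which is exactly your map $V_j\mapsto V_{n+k-j}$. Like you, it observes that this reflection swaps $d_1$ and $d_2$, fixes $X$, and carries $Q_1$ onto $Q_3$. Your explicit tracking of the vertex polygon $X,V_k,\dots,V_m$ under $\rho$ simply spells out the last step that the paper states briefly.
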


\begin{proof}
Place the regular $n$-gon on the unit circle and let the argument of $V_j$ be $2\pi j/n$. The reflection across the line of angle $k\pi/n$ sends the point of argument $\varphi$ to the point of argument $2k\pi/n-\varphi$. Hence
\[
  V_0\leftrightarrow V_k,
  \qquad
  V_m\leftrightarrow V_{n-m+k}.
\]
This reflection interchanges the two diagonals, fixes the intersection point $X$, and interchanges the regions $Q_1$ and $Q_3$. Since reflection preserves area, $a_1=a_3$.
\end{proof}

\subsection{Region sums that require classification}

By Proposition~\ref{prop:a1a3} and complementation, it suffices to classify the rationality of
\[
  a_0,\qquad a_2,\qquad a_1,\qquad a_0+a_1.
\]
All remaining region sums are recovered using the identities in Theorem~\ref{thm:profile}.

\subsection{Area formulas and reduction}

The total area of the regular $n$-gon is
\[
  \Area(P_n)=\frac n2\sin\frac{2\pi}{n}=\frac n2\sin(4\theta).
\]
In what follows, when a boundary arc contains $s$ edges, we write the signed area of the polygonal cap bounded by that arc and the corresponding chord as
\[
  \frac{s}{2}\sin\frac{2\pi}{n}-\frac12\sin\frac{2s\pi}{n}.
\]
We use this expression over the full range $0<s<n$. In particular, when $s>n/2$ the quantity $\sin(2s\pi/n)$ may be negative, but this is the signed expression that incorporates the orientation of the central triangle $\triangle OV_iV_{i+s}$, and it gives the actual cap area correctly. Moreover, since the four points $V_0,V_k,V_m,V_{n-m+k}$ lie on the circle in this order, the intersection point $X$ lies on the side of the chords $V_0V_k$ and $V_mV_{n-m+k}$ opposite to the corresponding boundary arc. Therefore the decomposition ``sum of a cap and a triangle'' used in each formula below applies directly without any change of sign.

With the following formulas we reduce all the rationality problems to the rationality of trigonometric quantities.

\begin{proposition}[Area reduction]\label{prop:area-reduction}
\label{prop:a0-formula}\label{prop:a2-formula}\label{prop:a1-formula}
\label{prop:adj-formula}
With the notation above, the four basic area ratios are as follows.
\begin{enumerate}[label=(\roman*)]
\item $a_0=k/n-2R/n$, where
\[
 R=R(n,k,l)=\frac{\sin(2k\theta)\sin(l\theta)}{\sin(4\theta)\sin((2k+l)\theta)}.
\]
\item $a_2=(k+l)/n+2S/n$, where
\[
 S=S(n,l,t)=\frac{\sin(l\theta)\cos((2t-l)\theta)}{\sin(4\theta)\cos(2t\theta)}.
\]
\item $a_1=a_3=t/n-T/n$, where
\[
 T=T(n,l,t)=\frac{2\sin^2(l\theta)\tan(2t\theta)}{\sin(4\theta)}.
\]
\item $a_0+a_1=m/n-U/n$, where
\[
 U=U(n,m)=\frac{\sin(2m\pi/n)}{\sin(2\pi/n)}.
\]
\end{enumerate}
In each item, the area ratio is rational if and only if the displayed trigonometric quantity is rational.
\end{proposition}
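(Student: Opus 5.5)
Each formula comes from decomposing the region into a polygonal cap plus a triangle with apex $X$, using the signed cap-area expression just introduced. We then divide by $\Area(P_n)=\frac n2\sin(4\theta)$. Work on the unit circle with $V_j=e^{2\pi ij/n}=e^{8ij\theta}$.

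\textbf{Item (iv).} This is the simplest case. $Q_0\cup Q_1$ is exactly the region cut off by $d_1=V_0V_m$ on the arc $V_0\to V_m$, because $X$ lies on $d_1$. So its area is the cap with $s=m$:
\[
\frac m2\sin\frac{2\pi}{n}-\frac12\sin\frac{2m\pi}{n}.
\]
Dividing by $\frac n2\sin(2\pi/n)$ gives $m/n-U/n$.

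\textbf{Items (i) and (ii).} $Q_0$ is the cap on the chord $V_0V_k$ (with $s=k$) together with the triangle $V_0V_kX$. Similarly, $Q_2$ is the cap on $V_mV_{n-m+k}$ (with $s=k+l$) together with the triangle $V_mV_{n-m+k}X$. In each case we compute the triangle using the base chord length $2\sin(\cdot)$ and the base angles at the chord endpoints.

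For $Q_0$, the chord $V_0V_k$ has length $2\sin(4k\theta)$. The angle at $V_0$ between $V_0V_k$ and $V_0V_m$ is an inscribed angle subtending the arc $V_k\to V_m$ of $t$ edges. The angle at $V_k$ subtends the arc $V_{n-m+k}\to V_0$, also of $t$ edges. Both base angles therefore equal $\pi t/n=2t\theta$, so the triangle is isosceles (consistent with the reflection symmetry) with area $\sin^2(4k\theta)\tan(2t\theta)$.

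Since $2k+l=n-2t$, we have $\tan(2t\theta)=\cot((2k+l)\theta)$. Hence
\[
\sin^2(4k\theta)\tan(2t\theta)=\sin^2(4k\theta)\frac{\cos((2k+l)\theta)}{\sin((2k+l)\theta)}.
\]
Combine this with the cap term $-\tfrac12\sin(8k\theta)=-\sin(4k\theta)\cos(4k\theta)$, and rewrite using $4k\theta=2\cdot 2k\theta$ together with product-to-sum identities. We expect the sum to collapse to $\frac k2\sin 4\theta - \sin(2k\theta)\sin(l\theta)/\sin((2k+l)\theta)$. Normalizing then gives $k/n-2R/n$. The key identity to verify is
\[
\sin(4k\theta)\bigl[\sin(4k\theta)\cos(C\theta)-\cos(4k\theta)\sin(C\theta)\bigr]=\sin(4k\theta)\sin((4k-C)\theta),
\]
with $4k-C=2k-l$. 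The remaining rearrangement into $\sin(2k\theta)\sin(l\theta)$ must be checked carefully, possibly after reexpressing via $\cos$. Item (ii) is the same computation with the arc of $k+l$ edges: the base angles are again $2t\theta$ and the chord is $2\sin(4(k+l)\theta)$. The complementary angle turns the cotangent into the $\cos((2t-l)\theta)/\cos(2t\theta)$ form.

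\textbf{Item (iii).} Rather than computing $a_1$ directly, we obtain it from $a_1=\bigl((a_0+a_1)-a_0\bigr)$, or from $2a_1=1-a_0-a_2$. Then $T=2R-U$ (up to the normalization), and this has to be simplified to $2\sin^2(l\theta)\tan(2t\theta)/\sin 4\theta$ by trigonometric identities. This gives a useful consistency check on items (i), (ii) and (iv).

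\textbf{Rationality equivalence.} In each item the area ratio has the form $r/n\pm cQ/n$ with $r$ an integer and $c\in\{1,2\}$. Since $n$ and $c$ are nonzero rationals, the ratio is rational if and only if $Q$ is.

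\textbf{Main obstacle.} The main obstacle is the trigonometric simplification in (i)–(iii), in particular getting the signs right when $k+l>n/2$. We plan to rely on the signed cap convention already justified, and to verify the resulting identities numerically on the hexagon $(6,1,2)$, where $a_0=1/18$ and $a_1=1/9$.
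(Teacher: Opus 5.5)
Your decompositions for (i), (ii) and (iv) match the paper's. In (i) and (ii) the region is a cap plus the isosceles triangle on the chord, with both base angles equal to $t\pi/n=2t\theta$; your inscribed-angle justification of this is correct. In (iv) it is the single cap cut off by $d_1$.

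\textbf{The computation in (i) and (ii) fails as written.} You convert between $\pi/n$ and $\theta$ with the wrong factor. Since $\theta=\pi/(2n)$, we have $2\pi/n=4\theta$. So $V_j=e^{4ij\theta}$, not $e^{8ij\theta}$. The chord $V_0V_k$ has length $2\sin(k\pi/n)=2\sin(2k\theta)$, not $2\sin(4k\theta)$. The cap term is $-\tfrac12\sin(2k\pi/n)=-\tfrac12\sin(4k\theta)$, not $-\tfrac12\sin(8k\theta)$.

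With your expressions the bracket gives $\sin(4k\theta)\sin((2k-l)\theta)/\sin(C\theta)$, and no rearrangement turns this into $-\sin(2k\theta)\sin(l\theta)/\sin(C\theta)$. For $(6,1,2)$ the first expression vanishes while the second is nonzero, and you would get $a_0=1/6$ instead of $1/18$. With the correct angles nothing is left to rearrange:
\[
 -\sin(2k\theta)\cos(2k\theta)+\sin^2(2k\theta)\frac{\cos(C\theta)}{\sin(C\theta)}
 =\frac{\sin(2k\theta)\sin((2k-C)\theta)}{\sin(C\theta)}
 =-\frac{\sin(2k\theta)\sin(l\theta)}{\sin(C\theta)}.
\]
Item (ii) needs the same correction. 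Put $\phi=(k+l)\pi/n=2(k+l)\theta=\pi/2+(l-2t)\theta$. Then the chord is $2\sin\phi$, the cap term is $-\sin\phi\cos\phi$, and the triangle has area $\sin^2\phi\tan(2t\theta)$. Their sum is $-\sin\phi\cos(\phi+2t\theta)/\cos(2t\theta)$. Since $\sin\phi=\cos((2t-l)\theta)$ and $\cos(\phi+2t\theta)=-\sin(l\theta)$, this gives $2S/n$. The sign worry when $k+l>n/2$ is handled, as you planned, by the signed cap formula and the fact that $X$ lies on the side of the chord opposite the arc.

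\textbf{Item (iii) takes a genuinely different route.} The paper computes $Q_1$ directly as a cap plus the non-isosceles triangle $V_kXV_m$, with base angles $k\pi/n$ and $(k+l)\pi/n$. Its area $\tan\tau(\cos^2\tau-\sin^2\lambda)$ then cancels the $\sin 2\tau$ terms. Your subtraction works once (i) is repaired, but your sign is reversed. From $a_1=(a_0+a_1)-a_0=t/n-(U-2R)/n$, the identity to prove is $T=U-2R$.

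To check it, use $4m\theta=\pi-2l\theta$, so $U=\sin(2l\theta)/\sin(4\theta)$, together with $\tan(2t\theta)=\cot(C\theta)$. Multiplying by $\sin(4\theta)\sin(C\theta)/(2\sin(l\theta))$ reduces the identity to $\sin(C\theta)\cos(l\theta)-\cos(C\theta)\sin(l\theta)=\sin(2k\theta)$. This holds because $C-l=2k$. The alternative $2a_1=1-a_0-a_2$ gives $T=S-R$, which also checks directly.

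Your route avoids the second triangle and exposes the relations $T=U-2R=S-R$. The cost is that (iii) then depends on (i) and (iv), whereas the paper's direct computation derives it independently. Your rationality equivalence argument is the same as the paper's.
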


\begin{proof}
\noindent\textit{(i)} $Q_0$ is the cap bounded by the arc $V_0\to V_k$ and the chord $V_0V_k$, together with the triangle $\triangle V_0X V_k$. Since the arc length is $k$, the area of the cap is
\[
  \frac{k}{2}\sin\frac{2\pi}{n}-\frac12\sin\frac{2k\pi}{n}.
\]
Setting $\alpha=(m-k)\pi/n$ and $\beta=k\pi/n$, the length of the base $V_0V_k$ is $2\sin\beta$, and by the inscribed angle theorem the base angles of the triangle $\triangle V_0XV_k$ are both $\alpha$. Hence the area of the triangle is $\sin^2\beta\tan\alpha$. Adding these and substituting
\[
  \alpha+\beta=\frac{m\pi}{n}=\frac\pi2-\frac{l\pi}{2n},
  \qquad
  \alpha=\frac\pi2-\frac{(2k+l)\pi}{2n},
\]
we obtain the asserted formula.

\noindent\textit{(ii)} The length of the boundary arc of $Q_2$ is
\[
  (n-m+k)-m=k+l.
\]
Hence the area of the cap bounded by the arc $V_m\to V_{n-m+k}$ and the chord $V_mV_{n-m+k}$ is
\[
  \frac{k+l}{2}\sin\frac{2\pi}{n}-\frac12\sin\frac{2(k+l)\pi}{n}.
\]
The base of the triangle $\triangle V_mXV_{n-m+k}$ is $2\sin((k+l)\pi/n)$ and its two base angles are $t\pi/n$, so its area is
\[
  \sin^2\frac{(k+l)\pi}{n}\tan\frac{t\pi}{n}.
\]
Therefore
\[
 a_2=\frac{k+l}{n}+
 \frac{-\sin(2(k+l)\pi/n)+2\sin^2((k+l)\pi/n)\tan(t\pi/n)}{n\sin(2\pi/n)}.
\]
Here, using
\[
  \frac{(k+l)\pi}{n}=\frac\pi2+(l-2t)\theta,
  \qquad
  \frac{t\pi}{n}=2t\theta
\]
and simplifying, we obtain the stated formula.

\noindent\textit{(iii)} $Q_1$ is the cap bounded by the arc $V_k\to V_m$ and the chord $V_kV_m$, together with the triangle $\triangle V_kXV_m$. Set $\tau=t\pi/n=2t\theta$ and $\lambda=l\pi/(2n)=l\theta$. The area of the cap is
\[
  \frac t2\sin\frac{2\pi}{n}-\frac12\sin(2\tau).
\]
Also, the base of the triangle $\triangle V_kXV_m$ is $2\sin\tau$ and its base angles are
\[
  \frac{k\pi}{n}=\frac\pi2-\tau-\lambda,
  \qquad
  \frac{(k+l)\pi}{n}=\frac\pi2-\tau+\lambda.
\]
By the law of sines, or by elementary trigonometry, the area of the triangle is
\[
  \tan\tau\, (\cos^2\tau-\sin^2\lambda).
\]
Therefore
\[
 \Area(Q_1)=\frac t2\sin\frac{2\pi}{n}-\frac12\sin(2\tau)
 +\tan\tau(\cos^2\tau-\sin^2\lambda).
\]
Since $\tan\tau\cos^2\tau=\frac12\sin(2\tau)$, the $\sin(2\tau)$ terms cancel, and
\[
  \Area(Q_1)=\frac t2\sin\frac{2\pi}{n}-\sin^2\lambda\tan\tau.
\]
Dividing by $\Area(P_n)$ gives the assertion.

\noindent\textit{(iv)} $Q_0\cup Q_1$ does not depend on the second diagonal $d_2$; it is the region bounded by the diagonal $V_0V_m$ and the boundary polyline $V_0V_1\cdots V_m$. The area of this region is, subtracting the triangle $\triangle OV_0V_m$ from the sum of the $m$ central triangles,
\[
  \frac m2\sin\frac{2\pi}{n}-\frac12\sin\frac{2m\pi}{n}.
\]
Dividing by $\Area(P_n)$ gives the assertion.

\end{proof}

\section{Arithmetic tools}\label{sec:tools}

\subsection{Cyclotomic field norms}

Let $N\ge2$, $\zeta=\exp(2\pi i/N)$, and $K=\Q(\zeta)$. For $0<r<N$, write
\[
  d_N(r)=\frac{N}{\gcd(N,r)},\qquad
  \delta_p(r)=
  \begin{cases}
    1/\varphi(d_N(r)),&d_N(r)\text{ is a power of }p,\\
    0,&\text{otherwise}.
  \end{cases}
\]
The dependence of $\delta_p$ on $N$ will always be clear from the choice of $\zeta$.

\begin{lemma}[Cyclotomic norms]\label{lem:cyclotomic-norm}
For $0<r<N$,
\[
 \left|\Norm_{K/\Q}(1-\zeta^r)\right|
 =\begin{cases}
 p^{\varphi(N)/\varphi(p^a)},&d_N(r)=p^a,\\
 1,&d_N(r)\text{ is not a prime power}.
 \end{cases}
\]
\end{lemma}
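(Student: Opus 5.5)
First reduce to a primitive root of unity. Put $d=d_N(r)$. Then $\zeta^r$ is a primitive $d$-th root of unity, so $\xi:=\zeta^r$ generates $L=\Q(\xi)$, a subfield of $K$ with $[K:L]=\varphi(N)/\varphi(d)$. By transitivity of norms,
\[
  \Norm_{K/\Q}(1-\zeta^r)=\Norm_{L/\Q}(1-\xi)^{[K:L]}.
\]
So it suffices to compute $\Norm_{L/\Q}(1-\xi)=\prod_{\sigma}(1-\sigma(\xi))$. The conjugates $\sigma(\xi)$ run over all primitive $d$-th roots of unity, so this product equals $\Phi_d(1)$, where $\Phi_d$ is the $d$-th cyclotomic polynomial. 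Note $d\ge2$ because $0<r<N$.

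It remains to evaluate $\Phi_d(1)$ for $d\ge2$. Start from
\[
  \frac{x^d-1}{x-1}=\prod_{e\mid d,\ e>1}\Phi_e(x),
\]
and set $x=1$ to get $d=\prod_{e\mid d,\,e>1}\Phi_e(1)$. If $d=p^a$, then $\Phi_d(x)=\Phi_p(x^{p^{a-1}})$, so $\Phi_d(1)=\Phi_p(1)=p$. For general $d$, argue by strong induction on $d$, or by Möbius inversion using $\log\Phi_d(1)=\sum_{e\mid d}\mu(d/e)\log e=\Lambda(d)$. This shows $\Phi_d(1)=1$ whenever $d$ is not a prime power. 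Concretely, the prime-power divisors $p^b$ ($1\le b\le v_p(d)$) already contribute $\prod_p p^{v_p(d)}=d$, which forces the remaining factors to multiply to $1$. Each remaining factor is a positive integer: it is a positive real number, since conjugate pairs give $|1-\xi|^2$, and an integer, since $\Phi_e\in\Z[x]$. Hence every remaining factor equals $1$.

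Combining the two steps gives $|\Norm_{K/\Q}(1-\zeta^r)|=p^{\varphi(N)/\varphi(p^a)}$ when $d=p^a$, and $1$ otherwise. The absolute value is harmless, since the norm is in fact positive. There is no real obstacle here. The only points needing care are identifying the degree $[K:L]=\varphi(N)/\varphi(d)$ and the non-prime-power evaluation of $\Phi_d(1)$, which the divisor-product identity handles cleanly.
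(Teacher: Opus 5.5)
Your proof is correct and follows the paper's argument: you reduce to the primitive $d$-th root $\zeta^r$, identify $\Norm_{\Q(\zeta^r)/\Q}(1-\zeta^r)$ with $\Phi_d(1)$, and raise it to the power $[K:\Q(\zeta^r)]=\varphi(N)/\varphi(d)$ via the tower formula. The only difference is that you derive the evaluation of $\Phi_d(1)$ yourself from the divisor-product identity and positivity, whereas the paper cites it as a standard fact.
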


\begin{proof}
Put $d=d_N(r)$ and $L=\Q(\zeta^r)$. Since $\zeta^r$ is a primitive $d$th root of unity, its conjugates give
\[
 \Norm_{L/\Q}(1-\zeta^r)
 =\prod_{\substack{1\le a\le d\\\gcd(a,d)=1}}(1-\zeta^{ra})
 =\Phi_d(1).
\]
The standard evaluation of $\Phi_d(1)$ is $p$ when $d$ is a power of the prime $p$, and $1$ otherwise; see Bzd\k{e}ga--Herrera-Poyatos--Moree~\cite{BzdegaHerreraPoyatosMoree2018}. By the tower formula for norms and $[K:L]=\varphi(N)/\varphi(d)$,
\[
 \left|\Norm_{K/\Q}(1-\zeta^r)\right|
 =\Phi_d(1)^{[K:L]}
 =\Phi_d(1)^{\varphi(N)/\varphi(d)},
\]
which proves the assertion.
\end{proof}

\begin{lemma}[Restriction of rational quotients]\label{lem:norm-restriction}
Suppose that
\[
 q=\xi\frac{\prod_{i=1}^{h}(1-\zeta^{r_i})}
                  {\prod_{j=1}^{s}(1-\zeta^{u_j})}\in\Q_{>0},
\]
where $\xi\in K$ is a root of unity and all exponents lie strictly between $0$ and $N$. Factors are counted with multiplicity. Then, for every prime $p$,
\[
 v_p(q)=\sum_{i=1}^{h}\delta_p(r_i)-\sum_{j=1}^{s}\delta_p(u_j).
 \tag{\ref{lem:norm-restriction}.1}
\]
In particular:
\begin{enumerate}[label=(\roman*)]
\item If $h,s\le3$, then $v_p(q)=0$ for every $p\ge5$.
\item Each nonzero $\delta_3$ is at most $1/2$, with equality exactly at order $3$. Each nonzero $\delta_2$ is at most $1/2$ when order $2$ is excluded, with equality exactly at order $4$.
\item For $p=3$, or for $p=2$ when all factors have order different from $2$, if $h\le2$ then $v_p(q)=1$ forces $h=2$, both numerator contributions to equal $1/2$, and all denominator contributions to vanish. Analogously, if $s\le2$ then $v_p(q)=-1$ forces $s=2$, both denominator contributions to equal $1/2$, and all numerator contributions to vanish.
\end{enumerate}
\end{lemma}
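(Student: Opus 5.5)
The main formula (\ref{lem:norm-restriction}.1) comes from taking absolute norms from $K$ to $\Q$ and using Lemma~\ref{lem:cyclotomic-norm} factor by factor; items (i)--(iii) are then bookkeeping with $\delta_p$.

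\textbf{Step 1 (norms and the valuation formula).} Since $q\in\Q_{>0}$, we have $\Norm_{K/\Q}(q)=q^{\varphi(N)}$. The root of unity $\xi$ has norm of absolute value $1$. By multiplicativity and Lemma~\ref{lem:cyclotomic-norm},
\[
 q^{\varphi(N)}=\frac{\prod_i|\Norm_{K/\Q}(1-\zeta^{r_i})|}{\prod_j|\Norm_{K/\Q}(1-\zeta^{u_j})|}.
\]
A factor $1-\zeta^r$ contributes a power of $p$ exactly when $d_N(r)=p^a$, and that exponent is $\varphi(N)/\varphi(p^a)=\varphi(N)\,\delta_p(r)$. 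Taking $v_p$ of both sides and dividing by $\varphi(N)$ gives (\ref{lem:norm-restriction}.1). Positivity of $q$ is only used so that $q^{\varphi(N)}$ has a well-defined $p$-adic valuation equal to $\varphi(N)v_p(q)$; signs are harmless in any case because we work with absolute values.

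\textbf{Step 2 (the bounds in (i) and (ii)).} For $p\ge5$, any nonzero $\delta_p(r)$ equals $1/\varphi(p^a)\le1/(p-1)\le1/4$. With $h,s\le3$, the right side of (\ref{lem:norm-restriction}.1) then lies in $[-3/4,3/4]$. Since $v_p(q)$ is an integer, it must be $0$, which is (i).

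For (ii), take $p=3$: $\varphi(3^a)\ge2$, with equality only for $a=1$, so every nonzero $\delta_3$ is at most $1/2$, with equality exactly at order $3$. For $p=2$ with order $2$ excluded, the order is $2^a$ with $a\ge2$, so $\varphi(2^a)\ge2$, with equality only at order $4$.

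\textbf{Step 3 (item (iii)).} Suppose $h\le2$, $v_p(q)=1$, and $p$ is as in (iii). By (ii), the numerator sum is at most $h/2\le1$. The denominator sum is nonnegative, so
\[
 1=\sum_i\delta_p(r_i)-\sum_j\delta_p(u_j)\le\sum_i\delta_p(r_i)\le 1.
\]
Equality throughout forces $h=2$, both $\delta_p(r_i)=1/2$, and all $\delta_p(u_j)=0$. The case $s\le2$ with $v_p(q)=-1$ is the same argument after replacing $q$ by $1/q$, which swaps the roles of numerator and denominator.

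I expect no real obstacle here. The only points needing care are the identification of the exponent $\varphi(N)/\varphi(p^a)$ with $\varphi(N)\delta_p(r)$, and the fact that the order $2$ case (where $\delta_2=1$) is deliberately excluded in (ii) and (iii).
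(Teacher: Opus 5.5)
Your proposal is correct and follows the paper's own proof. Like the paper, you take absolute norms factor by factor via Lemma~\ref{lem:cyclotomic-norm} and divide $p$-adic valuations by $\varphi(N)$, then use $\delta_p\le 1/(p-1)$ with integrality for (i), the orders $3$ and $4$ for (ii), and the equality cases of these bounds for (iii); you simply write out the bookkeeping that the paper compresses into one line.
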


\begin{proof}
Take absolute norms and divide their prime valuations by $[K:\Q]=\varphi(N)$. Since $\delta_p\le1/(p-1)$, the right-hand side of (\ref{lem:norm-restriction}.1) lies in $[-s/(p-1),h/(p-1)]$. Integrality proves (i); the possible orders give (ii), and equality in these bounds gives (iii).
\end{proof}

The quotients $R$ and $S$ below have two numerator factors and two denominator factors. The quotient $T$ has three numerator factors, counted with multiplicity, so its analysis at $2$ and $3$ must retain this multiplicity.

\subsection{Myerson's two-sine-product classification}
\label{subsec:myerson}

We use Myerson's classification of equal products of two sines at
rational angles~\cite[Theorem~4]{Myerson1993}. We divide the angles by
$\pi$ and state only the positive-angle case. The corrected table and an
explicit description of the allowed permutations appear in
Luk--Cheung~\cite[Theorem~2 and Table~5]{LukCheung2024}.

\begin{theorem}[Myerson]\label{thm:Myerson}
Let $x_1,x_2,x_3,x_4\in\Q\cap(0,1/2]$. The equality
\[
 \sin(\pi x_1)\sin(\pi x_2)
 =\sin(\pi x_3)\sin(\pi x_4)
\]
holds if and only if, up to interchanging the two entries within either
pair and interchanging the two pairs, $(x_1,x_2;x_3,x_4)$ has one of
the following forms, with parameters $a,b,\sigma\in\Q\cap(0,1/2]$:
\begin{enumerate}[label=\textup{(\roman*)}]
\item $(a,b;a,b)$;
\item $\left(\frac16,\sigma;\frac{\sigma}{2},\frac{1-\sigma}{2}\right)$;
\item one of the fifteen rows in Table~\ref{tab:myerson}.
\end{enumerate}
\end{theorem}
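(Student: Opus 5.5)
Since this is Myerson's theorem, in the paper it is taken from \cite[Theorem~4]{Myerson1993} together with the corrections in \cite{LukCheung2024}, not reproved. If I had to prove it, I would reduce it to a classification of short rational linear relations among cosines of rational multiples of $\pi$.

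\emph{Sufficiency.} I would check each form directly. Form (i) is trivial. For form (ii), the identity
\[
 \sin\tfrac{\pi}{6}\sin(\pi\sigma)=\tfrac12\sin(\pi\sigma)
 =\sin\tfrac{\pi\sigma}{2}\cos\tfrac{\pi\sigma}{2}
 =\sin\tfrac{\pi\sigma}{2}\sin\tfrac{\pi(1-\sigma)}{2}
\]
gives the equality. Each of the fifteen rows of Table~\ref{tab:myerson} is a finite identity in a cyclotomic field, so I would verify them by computation.

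\emph{Reduction for necessity.} I would apply the product-to-sum formula $2\sin\alpha\sin\beta=\cos(\alpha-\beta)-\cos(\alpha+\beta)$ to both sides. The equation then becomes
\[
 \cos\pi(x_1-x_2)-\cos\pi(x_1+x_2)-\cos\pi(x_3-x_4)+\cos\pi(x_3+x_4)=0,
\]
which is a signed linear relation among at most four cosines of rational multiples of $\pi$. Writing $\cos\pi y=\tfrac12(e^{i\pi y}+e^{-i\pi y})$ turns it into a vanishing sum of at most eight roots of unity, with signs. I would then invoke the Conway--Jones classification of rational linear relations among at most four such cosines. Equivalently, one can use the classification of minimal vanishing sums of roots of unity of length at most $8$, which up to rotation are built from the $2$-, $3$-, $5$- and $7$-term primitive relations and the known small composites.

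\emph{Case split.} I would distinguish three situations.
\begin{enumerate}[label=(\alph*)]
\item The sum cancels in pairs. Each cosine on the left is matched with one on the right, or a cosine vanishes because its argument is $1/2$. Using the ranges $x_i\in(0,1/2]$, the angle sums and differences lie in $(-1/2,1]$, so equal cosines force equal arguments. Solving the resulting linear equations gives form (i), up to the allowed swaps.
\item The relation involves a rational constant. A cosine equal to $\pm\tfrac12$ (argument $1/3$ or $2/3$) combines with a cosine pair of the shape $\cos\pi y-\cos\pi(1-y)$, or with a double-angle coincidence. Tracing this through should produce exactly the one-parameter family (ii).
\item The relation is a genuine Conway--Jones relation. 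This involves $\cos(\pi/5)$-type values, or primitive $7$-, $15$- and $30$-term relations. Here each of the finitely many relations of length at most four must be matched against the specific sign pattern $(+,-,-,+)$ and against the linear structure of the arguments $x_1\pm x_2$, $x_3\pm x_4$.
\end{enumerate}

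\emph{Main obstacle.} The bookkeeping in case (c) is the hard part. Each sporadic relation admits several assignments of its cosines to the four slots, and arguments must be folded into $[0,1]$ through $\cos\pi y=\cos\pi(-y)=-\cos\pi(1-y)$. Each assignment then has to be solved for $x_1,\dots,x_4$ and the resulting solutions reduced modulo the symmetries of the statement. This is where errors historically occurred, hence the corrected table in \cite{LukCheung2024}. I would organize this search by the least common denominator of the angles, which is bounded by the relation type (at most $30$ in the sporadic cases). At that point the search is finite and could be certified by computer, in the same spirit as Appendix~\ref{app:finite-matching}.
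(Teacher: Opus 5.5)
The paper does not prove this statement. It imports it from Myerson's Theorem~4 with the corrected table of Luk--Cheung, and checks only the table identities, by exact computation. Your sufficiency paragraph is therefore exactly what the paper does. Your necessity outline (product-to-sum, then Conway--Jones, then a finite search) is the standard route and sound in outline. However, your case split puts the one-parameter family in the wrong place, so step (b) as written would fail. For $(x_1,x_2;x_3,x_4)=(\tfrac16,\sigma;\tfrac\sigma2,\tfrac{1-\sigma}2)$ your relation reads
\[
 \cos\pi\bigl(\tfrac16-\sigma\bigr)-\cos\pi\bigl(\tfrac16+\sigma\bigr)-\cos\pi\bigl(\tfrac12-\sigma\bigr)+\cos\tfrac{\pi}{2}=0,
\]
where the fourth term vanishes. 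The other three form the parametric Conway--Jones relation $\cos\psi=\cos(\tfrac\pi3-\psi)+\cos(\tfrac\pi3+\psi)$ at $\psi=\pi(\tfrac12-\sigma)$; in root-of-unity terms this is a rotated $1+\omega+\omega^2$, its conjugate, and $i+(-i)$. For generic $\sigma$ no cosine equals $\pm\tfrac12$ and no nonzero constant appears, so (ii) does not arise in your case (b). Your case (c), which lists only the $\pi/5$-, $\pi/7$- and $\pi/15$-type relations, omits the parametric relation altogether.

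The roles are in fact reversed. Every Conway--Jones relation other than the parametric one has constant $\tfrac12$, and every row of Table~\ref{tab:myerson} has a cosine argument $\pm\tfrac13$ or $\tfrac23$. So the sporadic rows come from cases where a rational value $\pm\tfrac12$ balances one of the two- or three-term sporadic relations. Family (ii) comes from the zero-constant parametric relation, with the fourth cosine killed by $x_1+x_2=\tfrac12$ or $x_3+x_4=\tfrac12$.

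Reorganized this way, the bookkeeping you call the main obstacle also shrinks: the four-term Conway--Jones relations never occur. A four-term support would need all four of your cosines to be irrational, which leaves constant $0$. But those supports are minimal, so they carry only multiples of a relation with constant $\tfrac12$.

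There are two smaller points. First, Conway--Jones classify minimal relations. You therefore need a short linear-algebra step expressing an arbitrary relation on at most four folded angles through minimal relations supported inside that set. Second, your bound $30$ is correct for the cosine arguments $x_1\pm x_2$, $x_3\pm x_4$, but not for the $x_i$, which are half-sums of them. Rows 1--3 have common denominator $42$ and rows 4, 5 and 14 have $60$, so a computer search over the $x_i$ with common denominator at most $30$ would miss six rows.
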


\begin{table}[htbp]
\centering
\caption{The sporadic solutions in Myerson's classification.
The entries are angles divided by $\pi$. The numbering follows
Luk--Cheung~\cite[Table~5]{LukCheung2024}.}
\label{tab:myerson}
\begin{tabular}{@{}rcccc@{}}
\toprule
$j$ & $x_1$ & $x_2$ & $x_3$ & $x_4$\\
\midrule
1  & $1/21$ & $8/21$  & $1/14$ & $3/14$\\
2  & $1/14$ & $5/14$  & $2/21$ & $5/21$\\
3  & $4/21$ & $10/21$ & $3/14$ & $5/14$\\
4  & $1/20$ & $9/20$  & $1/15$ & $4/15$\\
5  & $2/15$ & $7/15$  & $3/20$ & $7/20$\\
6  & $1/30$ & $3/10$  & $1/15$ & $2/15$\\
7  & $1/15$ & $7/15$  & $1/10$ & $7/30$\\
8  & $1/10$ & $13/30$ & $2/15$ & $4/15$\\
9  & $4/15$ & $7/15$  & $3/10$ & $11/30$\\
10 & $1/30$ & $11/30$ & $1/10$ & $1/10$\\
11 & $7/30$ & $13/30$ & $3/10$ & $3/10$\\
12 & $1/15$ & $4/15$  & $1/10$ & $1/6$\\
13 & $2/15$ & $7/15$  & $1/6$  & $3/10$\\
14 & $1/12$ & $5/12$  & $1/10$ & $3/10$\\
15 & $1/10$ & $3/10$  & $1/6$  & $1/6$\\
\bottomrule
\end{tabular}
\end{table}

In row~13 the second entry is $7/15$, not $8/15$; this is the
correction noted in~\cite[Section~4, following Theorem~2]{LukCheung2024}.
These pair-preserving permutations give at most eight distinct orderings.
In the applications below, we test
these orderings against an additional linear relation among the angles.
The verification script \texttt{verify\_myerson.py} \cite{Watanabe2026Code} checks the table
identities and the finite comparisons in the proofs below using exact arithmetic.

\subsection{The Poonen--Rubinstein three-sine-product equation}

Poonen and Rubinstein studied concurrent diagonals in regular polygons by reducing concurrency to a three-sine-product equation and then to a vanishing sum of roots of unity of weight $12$. The same equation occurs here in the rationality of the lateral regions.

\begin{theorem}[Poonen--Rubinstein]\label{thm:PR}
The solutions in positive rationals $u_1,u_2,u_3,v_1,v_2,v_3$ of
\[
  u_1+u_2+u_3+v_1+v_2+v_3=1
\]
and
\[
  \sin(\pi u_1)\sin(\pi u_2)\sin(\pi u_3)
  =
  \sin(\pi v_1)\sin(\pi v_2)\sin(\pi v_3)
\]
consist, up to permutation within each of the two triples and interchange of the two triples, of the trivial solutions, four one-parameter families, and $65$ sporadic solutions, as classified in Poonen--Rubinstein~\cite[Theorem~4]{PoonenRubinstein1998}. Here a solution is \emph{trivial} if $(v_1,v_2,v_3)$ is a permutation of $(u_1,u_2,u_3)$, with multiplicities retained; the sum condition then gives $u_1+u_2+u_3=v_1+v_2+v_3=1/2$. The four one-parameter families are listed in Table~3 of that paper, and the sporadic solutions in Table~4.
\end{theorem}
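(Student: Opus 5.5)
Since Theorem~\ref{thm:PR} is quoted from~\cite[Theorem~4]{PoonenRubinstein1998}, we shall not reprove it. If a self-contained argument were wanted, we would retrace their route, which has three stages.

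\emph{Step 1: a vanishing sum of roots of unity.} Write $\sin(\pi x)=(e^{i\pi x}-e^{-i\pi x})/(2i)$ and expand both triple products. Each side becomes a signed sum of eight terms $\pm e^{i\pi(\pm u_1\pm u_2\pm u_3)}$. Use the constraint $\sum u_j+\sum v_j=1$ to write $e^{i\pi(u_1+u_2+u_3)}=-e^{-i\pi(v_1+v_2+v_3)}$. Two of the sixteen terms then cancel, or become equal up to sign, and the equation becomes a relation
\[
 \sum_{r=1}^{12}\epsilon_r\omega_r=0,\qquad \epsilon_r=\pm1,
\]
with $\omega_r$ roots of unity. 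Absorbing each sign into its root gives a vanishing sum of twelve roots of unity. Conversely, any such sum of the prescribed shape gives back a solution. So the problem becomes classifying vanishing sums of weight at most $12$ whose exponents have this special linear structure in the six unknowns.

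\emph{Step 2: minimal relations.} Decompose the vanishing sum into minimal sub-sums that vanish and contain no proper vanishing sub-sum. By the Conway--Jones/Mann theory, after rotation a minimal relation of weight $w$ involves only roots of order dividing the product of primes $p\le w$. Each such relation arises from the basic relations $R_p\colon 1+\zeta_p+\cdots+\zeta_p^{p-1}=0$ by repeatedly replacing one root with the negated remaining roots of some $R_q$. Enumerating these for total weight $\le12$ leaves a finite list. The primes involved are at most $11$, with restricted multiplicities, so all the $u_j,v_j$ can be taken to have denominators dividing a fixed explicit integer, up to the continuous parameters coming from rotations of the component relations.

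\emph{Step 3: solve by partition type.} For each way of splitting the twelve terms into minimal relations with weights summing to $12$, solve the linear equations for the $u_j,v_j$ forced by the term matching.
\begin{enumerate}[label=(\alph*)]
\item Splittings into six copies of $R_2$ (pairs $\omega,-\omega$) give term-by-term cancellation, which is the trivial solution $\{v_j\}=\{u_j\}$.
\item Splittings using $R_2$ and $R_3$ with free rotation parameters yield the four one-parameter families.
\item All remaining splittings have rigid components. They give finitely many candidates, which an exact computer search over the bounded common denominator reduces to the $65$ sporadic solutions.
\end{enumerate}

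\emph{Main obstacle.} I expect the difficulty to lie in Steps~2--3: making the list of minimal relations of weight $\le12$ complete, and keeping the case bookkeeping honest, including degenerate coincidences among terms and possible equalities of the unknowns. This is where Poonen--Rubinstein rely on machine enumeration, and I would follow them rather than attempt it by hand.
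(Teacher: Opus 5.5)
Your proposal matches the paper, which likewise only quotes this result from Poonen--Rubinstein's Theorem~4 (with their Tables~3 and~4) and gives no proof of its own, so deferring to that citation is exactly what is done there. Two small points in your optional sketch: the sum condition cancels four of the sixteen terms, namely $\pm e^{\pm i\pi(u_1+u_2+u_3)}$ and $\pm e^{\pm i\pi(v_1+v_2+v_3)}$, which is what leaves twelve; and in case (a) you must also allow pairs that cancel within a single triple's expansion (for example when $u_1=u_2+u_3$), and it is positivity together with the sum condition that shows these still give only the trivial solutions.
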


\section{Adjacent two-region sums}\label{sec:adjacent}

\begin{theorem}[Classification of adjacent two-region sums]\label{thm:Adj}
In the above setup, $a_0+a_1\in\Q$ holds if and only if either
\[
  n\text{ is even},
  \qquad
  m=\frac n2-1,
  \qquad
  1\le k<m,
\]
or
\[
  (n,k,m)=(12,1,3),\qquad (12,2,3).
\]
In the former case the value is
\[
  a_0+a_1=\frac12-\frac2n,
\]
and in the latter case the value is
\[
  a_0+a_1=\frac1{12}.
\]
The same classification holds for all adjacent two-region sums
\[
  a_0+a_1,
  \quad a_1+a_2,
  \quad a_2+a_3,
  \quad a_3+a_0.
\]
\end{theorem}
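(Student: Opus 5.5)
By Proposition~\ref{prop:area-reduction}(iv), $a_0+a_1\in\Q$ if and only if
\[
U=\frac{\sin(2m\pi/n)}{\sin(2\pi/n)}\in\Q .
\]
So the task is to classify rational values of this chord-length quotient for $2\le m<n/2$.

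\textbf{Expressing $U$ as a cyclotomic quotient.} Put $N=n$ and $\zeta=e^{2\pi i/n}$. Using $\sin x=(e^{ix}-e^{-ix})/(2i)$,
\[
U=\xi\,\frac{1-\zeta^{2m}}{1-\zeta^{2}}
\]
for a suitable root of unity $\xi$. Since $2\le m<n/2$, we have $0<2m<n$, so $\sin(2m\pi/n)>0$ and $U>0$. Both exponents lie strictly between $0$ and $n$, so Lemma~\ref{lem:norm-restriction} applies with $h=s=1$ and gives
\[
v_p(U)=\delta_p(2m)-\delta_p(2).
\]
Each $\delta_p$ lies in $[0,1/(p-1)]$, and $v_p(U)$ is an integer. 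Hence:
\begin{itemize}
\item for $p\ge3$, $v_p(U)=0$;
\item for $p=2$, $v_2(U)\neq 0$ would force one of the factors to have order exactly $2$.
\end{itemize}
The order of $\zeta^2$ is $2$ only when $n=4$, which is excluded. The order of $\zeta^{2m}$ is $2$ exactly when $4m\equiv 0\pmod n$ with $2m\not\equiv0$, and then $\delta_2(2m)=1$ and $v_2(U)=1$. Consequently $U\in\{1,2\}$.

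\textbf{Solving $U=1$.} This means $\sin(2m\pi/n)=\sin(2\pi/n)$. With both angles in $(0,\pi)$, either $2m\pi/n=2\pi/n$, so $m=1$, which is excluded, or $2m\pi/n=\pi-2\pi/n$, so $m=n/2-1$ with $n$ even. Conversely, for even $n\ge6$ and $m=n/2-1$ we get $U=1$, and
\[
a_0+a_1=\frac{m-1}{n}=\frac12-\frac2n .
\]

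\textbf{Solving $U=2$.} Here the norm computation says $\zeta^{2m}$ has order $2$, i.e.\ $2m\pi/n=\pi/2$, so $\sin(2m\pi/n)=1$ and we need $\sin(2\pi/n)=1/2$. This forces $n=12$, hence $m=3$, and $k\in\{1,2\}$. The value is
\[
a_0+a_1=\frac{3-2}{12}=\frac1{12}.
\]
One can also bypass the order bookkeeping: if $U=2$ then $\sin(2m\pi/n)=2\sin(2\pi/n)\le 1$, so $\sin(2\pi/n)\in\Q$, which among rational sines with $n\ge5$ gives only $1/2$, i.e.\ $n=12$.

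\textbf{The other adjacent sums.} We have $a_0+a_3=a_0+a_1$ by Proposition~\ref{prop:a1a3}. Also
\[
a_1+a_2=a_2+a_3=1-(a_0+a_1)
\]
by complementation, so all four adjacent sums are rational on exactly the same set.

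\textbf{Expected obstacle.} The only delicate step is the $p=2$ valuation analysis. We must verify that order-$2$ factors occur exactly in the cases identified above, and that no order-$4$ contributions can combine to give a nonzero integer valuation. With a single factor in the numerator and a single factor in the denominator, $v_2(U)$ lies in the set $\{0,\pm1/2,\ldots\}$ whenever no factor has order $2$; since it must be an integer, it is then $0$. So this step is only short bookkeeping.
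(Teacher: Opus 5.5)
Your proof is correct and follows the paper's route. You apply the norm restriction of Lemma~\ref{lem:norm-restriction} to $U=\zeta^{1-m}(1-\zeta^{2m})/(1-\zeta^2)$, which leaves only $U\in\{1,2\}$, and then solve both cases as in Lemma~\ref{lem:U-rational}. For the other adjacent sums you use $a_1=a_3$ together with complementation, whereas the paper's proof looks directly at the two sides of $d_2$; both arguments work.

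One aside should be deleted: your suggested ``bypass'' for $U=2$ is a non sequitur. The equation $\sin(2m\pi/n)=2\sin(2\pi/n)\le 1$ gives no reason for $\sin(2\pi/n)$ to be rational. What actually pins down $n=12$ is the order-$2$ argument, which forces $2m=n/2$ and hence $\sin(2\pi/n)=1/2$.
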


The adjacent sums are governed by the single sine quotient
\[
 U(n,m)=\frac{\sin(2m\pi/n)}{\sin(2\pi/n)}.
\]
If $r_j=\sin(j\pi/n)/\sin(\pi/n)$ denotes the chord length $|V_0V_j|$ divided by the side length, then $U=r_{2m}/r_2$. Thus this first classification connects directly with the arithmetic of diagonal-length ratios studied, for example, in~\cite{GrubbWoll2021}.

\begin{lemma}\label{lem:U-rational}
For $2\le m<n/2$, the quotient $U(n,m)$ is rational exactly when
\[
 n\text{ is even},\quad m=n/2-1,\quad U=1,
\]
or
\[
 (n,m)=(12,3),\quad U=2.
\]
\end{lemma}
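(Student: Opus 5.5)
We prove Lemma~\ref{lem:U-rational} by writing $U$ as a root of unity times a quotient of cyclotomic factors and then applying Lemma~\ref{lem:norm-restriction}.

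\emph{Cyclotomic form.} Put $N=n$ and $\zeta=e^{2\pi i/n}$. From $\sin x=(e^{ix}-e^{-ix})/(2i)$ we get
\[
 \frac{\sin(2m\pi/n)}{\sin(2\pi/n)}
 =\zeta^{(1-m)}\,\frac{1-\zeta^{2m}}{1-\zeta^{2}} .
\]
The range $2\le m<n/2$ gives $0<2m<n$, so $0<2m/n<1$ and $\sin(2m\pi/n)$ can have either sign. Hence $U$ is a nonzero real number. If it is rational, then $|U|$ has the shape required in Lemma~\ref{lem:norm-restriction}, with $h=s=1$ and a suitable root of unity $\xi$. Since $h,s\le 1$, formula (\ref{lem:norm-restriction}.1) gives
\[
 v_p(|U|)=\delta_p(2m)-\delta_p(2)\in\Bigl[-\tfrac1{p-1},\tfrac1{p-1}\Bigr].
\]
For $p\ge3$ this interval has length less than $2$ and is centered at $0$, so $v_p=0$. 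At $p=2$ the value must be an integer in $[-1,1]$.

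\emph{The prime $2$.} If $v_2=1$, then $\delta_2(2m)=1$ and $\delta_2(2)=0$, so $\zeta^{2m}$ has order $2$. That means $2m=n/2$, which gives $U=\sin(\pi/2)/\sin(2\pi/n)$. If $v_2=-1$, then $\zeta^2$ has order $2$, so $n=4$, which is impossible for $n\ge5$.

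\emph{Conclusion from the norm step.} Consequently $|U|=1$, except possibly when $n=4m$. In the case $n=4m$, the quantity $1/\sin(2\pi/n)$ is rational only if $\sin(\pi/(2m))\in\{1/2,1\}$. This gives $m=3$, $n=12$ and $U=2$, which is the listed sporadic case.

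\emph{Solving $|U|=1$.} This says $\sin(2m\pi/n)=\pm\sin(2\pi/n)$, with $2\pi/n$ and $2m\pi/n$ both in $(0,\pi)$. The angles satisfy either $2m\pi/n=2\pi/n$, which would force $m=1$ and is excluded, or $2m\pi/n=\pi-2\pi/n$. The minus sign would need a negative sine, and that cannot occur in $(0,\pi)$. So $2m=n-2$, that is, $n$ is even and $m=n/2-1$.

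\emph{Converse.} In both cases the value is checked directly: $U=1$ in the family and $U=2$ at $(12,3)$.

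\emph{Main obstacle.} The only delicate point is the order-$2$ exception at $p=2$, where Lemma~\ref{lem:norm-restriction}(ii),(iii) excludes order $2$. This is why the case $n=4m$ has to be handled separately by the elementary sine evaluation above rather than by the norm bound.
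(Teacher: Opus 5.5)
Your proof is correct and follows essentially the paper's route. Like the paper, you write $U=\zeta^{1-m}(1-\zeta^{2m})/(1-\zeta^{2})$ and use the norm valuation formula to force $U\in\{1,2\}$, with $U=2$ only when $2m=n/2$, and then solve the two sine equations. There are two small differences: the paper excludes $v_2(U)=-1$ by observing that $U=\zeta^{m-1}+\zeta^{m-3}+\cdots+\zeta^{-(m-1)}$ is an algebraic integer, whereas you exclude it directly by noting it forces $n=4$; and your remark that $\sin(2m\pi/n)$ ``can have either sign'' is wrong, since $2m\pi/n\in(0,\pi)$ gives $U>0$ (as you use later), though this is harmless because you work with $|U|$.
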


\begin{proof}
Setting $\zeta=\exp(2\pi i/n)$,
\[
  U(n,m)=\frac{\zeta^m-\zeta^{-m}}{\zeta-\zeta^{-1}}
  =\zeta^{m-1}+\zeta^{m-3}+\cdots+\zeta^{-(m-1)}.
\]
Hence $U(n,m)$ is an algebraic integer. If $U(n,m)\in\Q$, then $U(n,m)\in\Z$. Also, since $0<2m\pi/n<\pi$, we have $U(n,m)>0$.

We restrict the value using norms. Let $q=U(n,m)\in\Z_{>0}$.
\[
  q=\zeta^{1-m}\frac{1-\zeta^{2m}}{1-\zeta^2}.
\]
Lemma~\ref{lem:norm-restriction} gives, for each prime $p$,
\[
  v_p(q)=\delta_p(2m)-\delta_p(2).
\]
For $p\ge3$, by integrality $v_p(q)=0$. For $p=2$, only $q=1$ or $q=2$ remains.

If $q=1$, then
\[
  \sin\frac{2m\pi}{n}=\sin\frac{2\pi}{n}.
\]
Since $m\ge2$ and $2\pi/n<2m\pi/n<\pi$,
\[
  \frac{2m\pi}{n}=\pi-\frac{2\pi}{n},
\]
i.e. $2m+2=n$. Hence $n$ is even and $m=n/2-1$.

If $q=2$, then the norm condition requires $\zeta^{2m}=-1$, i.e. $2m=n/2$. In this case
\[
  U(n,m)=\frac1{\sin(2\pi/n)}.
\]
For this to be $2$, we need $\sin(2\pi/n)=1/2$, and since $n\ge5$, $n=12$. Hence $m=3$.

Both displayed cases satisfy the asserted identities by direct substitution.
\end{proof}

\begin{proof}[Proof of Theorem~\ref{thm:Adj}]
By Proposition~\ref{prop:adj-formula}, the rationality of $a_0+a_1$ is equivalent to that of $U(n,m)$. By Lemma~\ref{lem:U-rational}, $U$ is rational only when $n$ is even and $m=n/2-1$, or $(n,m)=(12,3)$. The values are obtained by substituting into Proposition~\ref{prop:adj-formula}. For $(12,3)$, from $1\le k<m$ we have $k=1,2$. The adjacent sum $a_2+a_3$ is the total minus $a_0+a_1$, so it has the same rationality. As for $a_1+a_2$ and $a_3+a_0$, we look at the two sides of the other congruent diagonal $d_2$. The endpoints of $d_2$ divide the circle into arcs of length $n-m$ and $m$. The normalized area of the cap-type region on the side of arc length $s$ is
\[
  \frac{s}{n}-\frac1n\frac{\sin(2s\pi/n)}{\sin(2\pi/n)}.
\]
Setting $s=n-m$,
\[
  \sin\frac{2(n-m)\pi}{n}=-\sin\frac{2m\pi}{n},
\]
so the rationality reduces to the rationality of the same sine quotient $U(n,m)$. Hence we obtain the same classification for all adjacent two-region sums.
\end{proof}

\section{The region \texorpdfstring{$Q_2$}{Q2}}\label{sec:q2}

\begin{theorem}[Classification of $Q_2$]\label{thm:Q2}
In the above setup, $a_2\in\Q$ holds if and only if one of the following cases occurs:
\[
  (n,k,m)=(6,1,2),\qquad (9,1,3),
\]
or, for even $n\ge8$,
\[
  k=\frac n2-3,
  \qquad
  m=\frac n2-2.
\]
The corresponding values are, respectively,
\[
  \frac{13}{18},\qquad \frac23,
  \qquad \frac12+\frac3n.
\]
\end{theorem}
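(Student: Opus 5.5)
By Proposition~\ref{prop:a2-formula}, $a_2\in\Q$ exactly when
\[
 S=\frac{\sin(l\theta)\cos((2t-l)\theta)}{\sin(4\theta)\cos(2t\theta)}\in\Q .
\]
The plan is to pin down the possible rational values of $S$ with the norm lemma, and then solve each resulting trigonometric equation using Myerson's classification.

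\textbf{Step 1: sign and cyclotomic form.} Since $0<l\theta<\pi/2$, $0<2t\theta<\pi/2$ and $|2t-l|\theta<\pi/2$, we have $S>0$. Rewrite every factor as a sine using $\cos x=\sin(\pi/2-x)$. Then work with $\zeta=\exp(2\pi i/N)$, $N=4n$, so that all angles are multiples of $\pi/(2n)$. This turns $S$ into
\[
 S=\xi\,\frac{(1-\zeta^{r_1})(1-\zeta^{r_2})}{(1-\zeta^{u_1})(1-\zeta^{u_2})},
\]
with the four exponents $l$, $n-2t+l$, $4$, $n-2t$ (up to the factor-of-two conventions), and no factor of order $2$.

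\textbf{Step 2: restrict the rational values.} Lemma~\ref{lem:norm-restriction}(i) gives $v_p(S)=0$ for $p\ge5$. For $p=2,3$, part (iii) says $|v_p(S)|\le1$, and $v_p=\pm1$ forces two factors of order exactly $4$ (for $p=2$) or order $3$ (for $p=3$) on one side, with the other side contributing nothing. Hence $S\in\{1,2,1/2,3,1/3,6,1/6,3/2,2/3\}$, and each value other than $1$ comes with explicit order constraints. These constraints fix $l\theta$ and $(2t-l)\theta$, or $4\theta$ and $2t\theta$, as specific rational multiples of $\pi$.

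\textbf{Step 3: the exceptional values.} For $S\ne1$ the order constraints leave a one- or two-parameter situation. For example, $v_2(S)=-1$ needs both denominator angles to give order-$4$ factors. The denominator angle $4\theta=2\pi/n$ can have order $4$ only for small $n$, so $n$ is confined to a small set. I would substitute these constraints and solve directly; I expect this to produce $(6,1,2)$ and $(9,1,3)$, and to rule out everything else by checking finitely many cases.

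\textbf{Step 4: the equation $S=1$.} This is the main case:
\[
 \sin(l\theta)\,\sin\!\big(\tfrac{\pi}{2}-(2t-l)\theta\big)=\sin(4\theta)\,\sin\!\big(\tfrac{\pi}{2}-2t\theta\big).
\]
All four angles divided by $\pi$ lie in $(0,1/2]$, so Theorem~\ref{thm:Myerson} applies. I would go through its cases, using the relation that ties the angles together, $n\theta=\pi/2$.
\begin{itemize}
\item In the trivial case (i), matching $l\theta=4\theta$ with $(2t-l)=2t$ is impossible. The other matching, $l\theta=\pi/2-2t\theta$ together with $\pi/2-(2t-l)\theta=4\theta$, gives $l+2t=n$ and $n-2t+l=4$. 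That forces $l=2$, $2t=n-2$, which violates $l+2t\le n-2$. Instead I expect the cross matching to give $l=4$, $t=1$, i.e.\ $m=n/2-2$, $k=n/2-3$ with $n$ even. Careful bookkeeping of which pairing is admissible is needed here. Substituting into Proposition~\ref{prop:a2-formula} should give the value $1/2+3/n$.
\item Family (ii) involves the angle $1/6$, so one of the four angles equals $\pi/6$. Combined with the linear relation, this should pin down a finite list (including possibly $n=6$ or $9$).
\item For the fifteen sporadic rows of Table~\ref{tab:myerson}, I would test all eight orderings against the constraints that the angles are $l/(2n)$, $4/(2n)$, $1/2-2t/(2n)$ and $1/2-(2t-l)/(2n)$ with integral admissible parameters. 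This is a finite check of the kind done by \texttt{verify\_myerson.py}.
\end{itemize}

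The main obstacle is Step 4: the exhaustive matching of Myerson's forms against the linear constraints, and making sure the only infinite family is $(n,n/2-3,n/2-2)$. Finally I would verify the three listed values directly.
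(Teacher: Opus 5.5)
\textbf{Gap at $p=2$ in Step~2.} The claim that no cyclotomic factor has order $2$ is false exactly when $l=2t$. Then $\cos((2t-l)\theta)=1$. In the normalization matching your exponents ($N=2n$), the corresponding factor is $1+\zeta^{0}=1-\zeta^{n}=2$, which has order $2$ and $\delta_2=1$. So Lemma~\ref{lem:norm-restriction}(iii) is unavailable at $p=2$, and $v_2(S)=1$ no longer forces two numerator factors of order $4$.

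Concretely, take the admissible triple $(n,l,t)=(12,2,1)$. The other exponents $l=2$, $4$, $n+2t=14$ have orders $12,6,12$, none a prime power. The norm computation therefore gives $v_2(S)=1$ and $v_p(S)=0$ for every other $p$. It says only that a rational $S$ would have to equal $2$. The same happens for infinitely many triples, for example $(n,2,1)$ with $4\mid n$ and $n/2$ not a prime power. This is therefore not the finite check you envisage in Step~3. Applied literally, your rule would force $l=n/2$ and $2t\in\{0,n\}$, and so would exclude $v_2(S)=1$ by an invalid deduction.

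On the locus $l=2t$ one has $S=\tan(t\pi/n)/\sin(2\pi/n)$ with $1\le t<n/4$, since $4t=l+2t\le n-2$. The value $S=2$, i.e.\ $\tan(t\pi/n)=2\sin(2\pi/n)$, must be ruled out by a separate argument. The paper does this with the elementary inequality of Lemma~\ref{lem:tan-obstruction}. The only exceptional rational value, $S=2/3$ at $(n,l,t)=(6,2,1)$, also lies on this locus.

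\textbf{Angle range in Step~4.} This error is easier to repair. The four angles do not all lie in $(0,1/2]$: $\tfrac12-\tfrac{2t-l}{2n}>\tfrac12$ whenever $l>2t$. That is exactly the regime of the family $l=4$, $t=1$, which is why both of your trivial matchings fail and the answer is only ``expected''. Replace that angle by $\tfrac12-\tfrac{|2t-l|}{2n}$ before invoking Theorem~\ref{thm:Myerson}. The linear constraint then becomes a disjunction ($b=a+d$ or $a+b+d=1$ in the paper's notation), and the trivial case gives $l=4$ and $|2t-l|=2t$, hence $t=1$, directly.

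\textbf{Source of $(9,1,3)$.} This configuration does not arise in Step~3. There $(n,l,t)=(9,3,2)$ and $S=1$; it is the $\sigma=4/9$ member of Myerson's family~(ii), so your analysis of that family must find it.
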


By Proposition~\ref{prop:a2-formula}, it suffices to classify the rational values of $S(n,l,t)$. We use the same two stages as for the adjacent sine quotient: a norm restriction followed by the classification of the surviving value.

\subsection{Restriction of the rational value}

\begin{lemma}\label{lem:S-rational-values}
Under the conditions
\[
  n\ge5,
  \quad l\ge1,
  \quad t\ge1,
  \quad l+2t\le n-2,
  \quad n-l\equiv0\pmod2,
\]
suppose $S(n,l,t)\in\Q$. Then
\[
  S(n,l,t)=1
\]
or
\[
  (n,l,t)=(6,2,1),\qquad S(n,l,t)=\frac23.
\]
\end{lemma}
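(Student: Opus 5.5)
The plan is to put $S$ into the form required by Lemma~\ref{lem:norm-restriction} and then use the $2$- and $3$-adic constraints to cut the possible rational values down to a finite list. First rewrite the cosines as sines using $\cos x=\sin(\pi/2-x)$ and $\pi/2=n\theta$. This gives
\[
 S(n,l,t)=\frac{\sin(l\theta)\,\sin((n+l-2t)\theta)}{\sin(4\theta)\,\sin((n-2t)\theta)} .
\]
Each of the four angles lies in $(0,\pi)$ because $1\le l$ and $l+2t\le n-2$. Hence $S>0$, and no factor vanishes.

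Put $N=2n$ and $\zeta=\exp(2\pi i/N)=\exp(2i\theta)$. The identity $1-\zeta^j=-2i\,\zeta^{j/2}\sin(j\theta)$ then expresses $S$ as
\[
 S=\xi\,\frac{(1-\zeta^{l})(1-\zeta^{n+l-2t})}{(1-\zeta^{4})(1-\zeta^{n-2t})}
\]
with $\xi$ a root of unity (a sign in $\Q(\zeta)$ up to the half-integer exponents, which cancel in pairs). All exponents lie strictly between $0$ and $N$. If $S=q\in\Q$, Lemma~\ref{lem:norm-restriction}(i) gives $v_p(q)=0$ for every $p\ge5$.

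An order-$2$ factor occurs only when an exponent equals $n$, which happens exactly when $l=2t$; then $\sin((n+l-2t)\theta)=1$. I would treat this case separately with $h=1$, where $v_2,v_3\le 1/2<1$ forces $v_2(q),v_3(q)\le 0$. Otherwise Lemma~\ref{lem:norm-restriction}(ii),(iii) apply at both $p=2$ and $p=3$. They give $|v_2(q)|,|v_3(q)|\le1$, and any nonzero valuation forces two specific factors to have order $4$ (for $p=2$) or order $3$ (for $p=3$). This leaves $q\in\{2^a3^b: a,b\in\{-1,0,1\}\}$, and each $q\neq1$ comes with explicit order conditions.

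Next I would go through these conditions. A denominator contribution of $1/2$ at $p=2$ needs $\zeta^4$ to have order $4$, i.e.\ $4\theta=\pi/2$ and $n=4$, which is excluded. So $v_2(q)\ge 0$, and $v_2(q)=1$ forces $\zeta^{l}$ and $\zeta^{n+l-2t}$ both to have order $4$. For $p=3$, the factor $1-\zeta^4$ has order $3$ only when $n=6$ or $n=3$, that is, only when $n=6$ here. Similarly, each numerator order-$3$ condition fixes $l\theta$ or $(n+l-2t)\theta$ at $\pi/3$ or $2\pi/3$.

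In every case with $q\ne1$, the order conditions pin the relevant angles to multiples of $\pi/3$ or $\pi/4$. The remaining identity $\sin\cdot\sin=q\sin\cdot\sin$ then becomes an equation in at most one free angle, and I expect it to force a small $n$. For example, $v_3(q)=-1$ needs $n=6$, and the admissible triples with $n=6$ are finitely many, so I can check them directly; this should produce $(6,2,1)$ with $S=2/3$. The cases $v_2(q)=1$ or $v_3(q)=1$ give relations of the form $\sin(4\theta)\sin((n-2t)\theta)=c$ with $c\in\{1/2,3/4,3/8,\dots\}$. These have only finitely many solutions: one can apply Theorem~\ref{thm:Myerson} after writing $c$ as a product of two sines at $\pi/6,\pi/4,\pi/3$, or simply compare sizes. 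Then I would check those solutions against the admissibility constraints.

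I expect the main obstacle to be the bookkeeping of the mixed cases such as $q=3/2$, $q=2/3$, or $q=6$, together with the degenerate case $l=2t$. In the mixed cases the $2$- and $3$-conditions interact, and one must check that the order requirements are compatible with a single $n$ while not overlooking the normalising root of unity $\xi$. For these I would first try to show directly that the required orders are incompatible with $4\theta=2\pi/n$ unless $n=6$, and only fall back on Theorem~\ref{thm:Myerson} if a one-parameter family survives. The lemma deliberately leaves the value $q=1$ unresolved; it is handled afterwards.
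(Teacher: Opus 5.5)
Your treatment of the nondegenerate case $l\ne 2t$ follows the paper's proof. The degenerate case $l=2t$, however, contains a genuine error that leaves a case open.

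\textbf{The error in the case $l=2t$.} Here $\sin((n+l-2t)\theta)=\sin(\pi/2)=1$ corresponds to the factor $1-\zeta^{n}=2$. This is not a unit. In Lemma~\ref{lem:norm-restriction} it has order $2$ and contributes $\delta_2(n)=1$. Equivalently, if you delete it to get $h=1$, the prefactor becomes $-2i$ times a root of unity, not a root of unity. The correct relation is
\[
 v_2(q)=1+\delta_2(l)-\delta_2(4)-\delta_2(n-2t)\in\{0,1\},
\]
not $v_2(q)\le 0$. Only your bound $v_3(q)\le 0$ is right in this case. The lemma's own exception already contradicts your claim: $(6,2,1)$ has $l=2t$ and $S=2/3$, so $v_2(q)=1$.

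\textbf{The missed case $q=2$.} Because of this, $q=2$ survives the norm restriction when $l=2t$, and no angle gets pinned. For instance, for the admissible triple $n=12$, $l=2$, $t=1$, all three $\delta_2$ terms vanish, so norms are consistent with $S=2$. In this case $S=\tan(t\pi/n)/\sin(2\pi/n)$, and $q=2$ is the equation
\[
 \tan(t\pi/n)=2\sin(2\pi/n).
\]
It has to be excluded over the infinite range $n\ge5$, $1\le t<n/4$, the upper bound coming from $4t=l+2t\le n-2$. Your plan of reading off angles at multiples of $\pi/3$ or $\pi/4$ and checking finitely many $n$ never reaches this equation, so an extra argument is required.

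The paper supplies it with the elementary inequality of Lemma~\ref{lem:tan-obstruction}: it treats $t=1,2,3$ separately, and for $t\ge4$ uses $\tan(tx)\ge\tan 4x>4x>2\sin 2x$. Alternatively, you can rewrite the equation as $\sin(\pi/6)\sin(t\pi/n)=\sin(2\pi/n)\sin(\pi/2-t\pi/n)$ and check it against Theorem~\ref{thm:Myerson}. This does work: the trivial solutions need $t=n/4$ or $(n,t)=(6,2)$, the family needs $t/n=1/3$, and rows $12$, $13$, $15$ fail.

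\textbf{A smaller slip at $p=2$.} The root of unity $\zeta^4=e^{4\pi i/n}$ has order $4$ exactly when $n=8$, not $n=4$. You used the sine angle $4\theta$ instead of the argument $8\theta$ of $\zeta^4$; your $p=3$ computation, giving $n\in\{3,6\}$, is the correct one. So $v_2(q)=-1$ is not excluded for free. At $n=8$, the condition $\delta_2(n-2t)=1/2$ forces $t=2$, and admissibility then forces $l=2$. The valuation sum is then $\tfrac14+\tfrac14-\tfrac12-\tfrac12=-\tfrac12\notin\Z$, which is the needed contradiction. Your conclusion $v_2(q)\ge0$ survives, but only after this check.
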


\begin{proof}
Set $q=S(n,l,t)\in\Q$. Since $S>0$, we have $q>0$. Set $N=2n$ and $\zeta=\exp(2\pi i/N)$. Then
\[
  S=\zeta^2
  \frac{(1-\zeta^l)(1+\zeta^{2t-l})}{(1-\zeta^4)(1+\zeta^{2t})}.
\]
Since $\zeta^n=-1$,
\[
  1+\zeta^{2t-l}=1-\zeta^{n+2t-l},
  \qquad
  1+\zeta^{2t}=1-\zeta^{n+2t}.
\]
Thus, for each prime $p$, Lemma~\ref{lem:norm-restriction} gives
\[
  v_p(q)=\delta_p(l)+\delta_p(n+2t-l)-\delta_p(4)-\delta_p(n+2t).
  \tag{\ref{lem:S-rational-values}.1}
\]

Lemma~\ref{lem:norm-restriction}(i) gives $v_p(q)=0$ for $p\ge5$.

Next consider $p=2$. For $l,4,n+2t$, the case $d_N(r)=2$ does not occur. On the other hand, $d_N(n+2t-l)=2$ occurs only when
\[
  n+2t-l=n,
  \qquad\text{i.e.}\qquad l=2t.
\]

First we show that $v_2(q)<0$ cannot occur. If it did, since the right-hand side is an integer, $v_2(q)=-1$. As each negative term is at most $1/2$, this requires
\[
  \delta_2(4)=\delta_2(n+2t)=\frac12.
\]
In particular $d_N(4)=4$, and under $n\ge5$ this forces $N=16$, i.e. $n=8$. Furthermore, from $d_{16}(8+2t)=4$ we get $t=2$. By $l+2t\le n-2$ and $n-l\equiv0\pmod2$ we get $l=2$. But in this case
\[
  \delta_2(l)+\delta_2(n+2t-l)-\delta_2(4)-\delta_2(n+2t)
  =\frac14+\frac14-\frac12-\frac12
  =-\frac12,
\]
not an integer, a contradiction.

The positive contributions are at most $1/2+1=3/2$, so integrality gives $v_2(q)\le1$. Next let $v_2(q)=1$. If $l\ne2t$, then the positive terms are also at most $1/2$. To reach the integer $1$ we need
\[
  \delta_2(l)=\delta_2(n+2t-l)=\frac12,
  \qquad
  \delta_2(4)=\delta_2(n+2t)=0.
\]
The former means $l=n/2$ and that $n+2t-l$ is $n/2$ or $3n/2$. These give $t=0$ or $2t=n$ respectively, both contradicting the conditions. Hence $l=2t$. That is,
\[
  v_2(q)=1\quad\Longrightarrow\quad l=2t.
  \tag{\ref{lem:S-rational-values}.2}
\]

For $p=3$, $v_3(q)=1$ is impossible. If both positive contributions were $1/2$, then $d_N(l)=d_N(n+2t-l)=3$; but from $0<l<n$ we get $l=2n/3$, and further $n+2t-l=2n/3$ or $4n/3$, both contradicting $l+2t\le n-2$. Also, if $v_3(q)=-1$ occurred, the two negative terms would both have to be $1/2$. In particular $d_N(4)=3$, so under $n\ge5$ this forces $N=12$, i.e. $n=6$. In this case the conditions give $(l,t)=(2,1)$, and direct computation gives
\[
  S(6,2,1)=\frac23.
\]

By the above, except for the exception $(6,2,1)$, the only prime factor of $q$ is $2$, so $q=1$ or $q=2$. We show that $q=2$ is impossible. If $q=2$, then by (\ref{lem:S-rational-values}.2) we have $l=2t$. In this case
\[
  S=\frac{\tan(2t\theta)}{\sin(4\theta)}.
\]
Hence $S=2$ means
\[
  \tan\left(\frac{t\pi}{n}\right)=2\sin\left(\frac{2\pi}{n}\right).
\]
With $x=\pi/n$ and $r=t$, the admissibility condition gives $1\le r<n/4$. Lemma~\ref{lem:tan-obstruction} below excludes this equality. Therefore $q=2$ is impossible, and apart from the exception $q=1$.
\end{proof}

\begin{lemma}[An elementary inequality]\label{lem:tan-obstruction}
For integers $n\ge5$ and $1\le r<n/4$, with $x=\pi/n$,
\[
 \tan(rx)\ne2\sin(2x).
\]
\end{lemma}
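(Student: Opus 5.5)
The plan is a case split on $r$. The argument uses only monotonicity, the equation $\cos 2x=\tfrac12$, and one concavity argument; no arithmetic input is needed. Throughout, $x=\pi/n\le\pi/5$ and $1\le r<n/4$, so $0<rx<\pi/4$ and $\tan(rx)$ is defined and positive. We treat $r=1$, $r=2$, $r=3$ and $r\ge4$ separately.

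\emph{The case $r\ge4$.} Since $\tan$ is increasing on $[0,\pi/2)$ and $4x\le rx<\pi/4$, we have
\[
\tan(rx)\ge\tan(4x)>4x>2\sin(2x).
\]
This uses $\tan y>y$ and $\sin y<y$ for $y>0$, so equality is impossible.

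\emph{The cases $r=1$ and $r=2$.} For $r=1$, the equation $\tan x=2\sin 2x=4\sin x\cos x$ with $\sin x\ne0$ becomes $\cos^2x=\tfrac14$. This forces $x=\pi/3$, i.e.\ $n=3$, which is excluded by $n\ge5$. For $r=2$, the equation $\tan 2x=2\sin 2x$ with $\sin2x\ne0$ becomes $\cos 2x=\tfrac12$, so $x=\pi/6$ and $n=6$. But $r=2<n/4$ requires $n\ge9$, a contradiction.

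\emph{The case $r=3$.} This is the only delicate case. Here $3<n/4$ forces $n\ge13$, so $x\in(0,\pi/13]$. Set
\[
f(x)=2\sin 2x-\tan 3x \quad\text{on } [0,\pi/12].
\]
Then $f(0)=0$ and $f(\pi/12)=2\sin(\pi/6)-\tan(\pi/4)=0$; this is precisely the boundary configuration $n=12$, $r=3$, which the hypothesis $r<n/4$ excludes. Differentiating twice gives
\[
f''(x)=-8\sin 2x-18\sec^2(3x)\tan(3x),
\]
which is strictly negative on $(0,\pi/12]$. Hence $f$ is strictly concave, and since it vanishes at both endpoints, $f>0$ on the open interval $(0,\pi/12)$. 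As $\pi/13<\pi/12$, this gives $\tan(3x)<2\sin(2x)$ for every admissible $n$, so equality cannot hold.

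The only genuine obstacle is the case $r=3$. There the crude bounds $\tan y>y$ and $\sin y<y$ point the wrong way, since $3x<4x$. The endpoint equality at $n=12$ suggests the concavity argument, and I would make sure to check the sign of $f''$ carefully there.
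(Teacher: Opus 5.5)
Your proof is correct. It uses the same case split as the paper ($r=1$, $r=2$, $r=3$, $r\ge4$), with the identical chain $\tan(rx)\ge\tan4x>4x>2\sin2x$ for $r\ge4$.

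For $r=1,2$ you solve the equation, getting $\cos^2x=\tfrac14$ and $\cos2x=\tfrac12$ respectively, and show that the unique root lies outside the admissible range. The paper instead states the strict inequalities directly from $\cos x>\tfrac12$ and $x<\pi/8$; this is the same computation.

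The genuine difference is the case $r=3$. The paper uses the product-to-sum identity
\[
 2\sin2x-\tan3x=\frac{\sin x\,(2\cos4x-1)}{\cos3x},
\]
so positivity for $x<\pi/12$ is immediate from $4x<\pi/3$. The factor $2\cos4x-1$ also shows at once why $n=12$ is the boundary case.

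Your concavity argument is also valid; your second derivative $f''=-8\sin2x-18\sec^2(3x)\tan(3x)$ is correct and negative on $(0,\pi/12]$. With $f(0)=f(\pi/12)=0$, strict concavity gives $f>0$ on $(0,\pi/12)$. It costs a derivative computation and an endpoint evaluation, whereas the identity gives the sign in one line. Either is enough here, since $n\ge13$ forces $x\le\pi/13<\pi/12$.
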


\begin{proof}
For $r=1$, the inequality $\tan x<2\sin2x$ follows from $\cos x>1/2$. For $r=2$, $x<\pi/8$ gives $\tan2x<2\sin2x$. For $r=3$, $x<\pi/12$ and
\[
 2\sin2x-\tan3x=\frac{\sin x(2\cos4x-1)}{\cos3x}>0.
\]
For $r\ge4$, $4x\le rx<\pi/4$, and
\[
 \tan(rx)\ge\tan4x>4x>2\sin2x.
\]
\end{proof}

\subsection{Classification of \texorpdfstring{$S=1$}{S=1}}

\begin{lemma}\label{lem:S-one}
Under the admissibility conditions of
Lemma~\ref{lem:S-rational-values}, $S(n,l,t)=1$ holds if and only if
\[
 (n,l,t)=(9,3,2),
\]
or
\[
 l=4,\qquad t=1,\qquad n\equiv0\pmod2,\qquad n\ge8.
\]
\end{lemma}

\begin{proof}
Put
\[
 a=\frac{l}{2n},\qquad c=\frac2n,\qquad
 d=\frac12-\frac tn,\qquad
 b=\frac12-\left|\frac12-a-d\right|.
\]
Admissibility gives $d-a=(n-l-2t)/(2n)\ge1/n>0$, so
$0<a<d<1/2$, $0<c<1/2$, and $0<b\le1/2$.
Replacing the cosine factors by sines shows that $S=1$ is equivalent to
\[
 \sin(\pi a)\sin(\pi b)=\sin(\pi c)\sin(\pi d).
\]
Moreover, the definition of $b$ is equivalent, in these ranges, to
\begin{equation}\label{eq:S-myerson-test}
 b=a+d\qquad\text{or}\qquad a+b+d=1.
\end{equation}
The original parameters are recovered by
\[
 n=\frac2c,\qquad l=2na,\qquad
 t=n\left(\frac12-d\right).
\]
We apply Theorem~\ref{thm:Myerson}.

\emph{Trivial solutions.}
Since $a<d$, equality of the two unordered pairs forces $a=c$ and
$b=d$. Thus $l=4$ and $|2t-4|=2t$, so $t=1$.
The remaining admissibility conditions give even $n\ge8$.

\emph{The one-parameter family.}
Write $h=(1-\sigma)/2$. The following table lists, for each of the
eight orderings, all roots of \eqref{eq:S-myerson-test} in
$0<\sigma\le1/2$:
\[
\begin{array}{c|c}
 (a,b;c,d)&\sigma\\ \hline
 (1/6,\sigma;\sigma/2,h)&4/9\\
 (1/6,\sigma;h,\sigma/2)&1/3\\
 (\sigma,1/6;\sigma/2,h)&\text{none}\\
 (\sigma,1/6;h,\sigma/2)&1/9\\
 (\sigma/2,h;1/6,\sigma)&1/4,\ 1/2\\
 (\sigma/2,h;\sigma,1/6)&1/3\\
 (h,\sigma/2;1/6,\sigma)&1/2\\
 (h,\sigma/2;\sigma,1/6)&\text{none}
\end{array}
\]
Each row requires solving only two affine equations. The roots
$\sigma=1/3$ give $a=d$, the root $\sigma=1/9$ gives $a>d$, and the
roots $\sigma=1/2$ give $d=1/2$. These contradict $0<a<d<1/2$.
The remaining roots $4/9$ and $1/4$ yield, respectively,
\[
 (n,l,t)=(9,3,2),\qquad (12,3,3).
\]
The second triple is excluded by $n-l\equiv0\pmod2$.

\emph{Sporadic solutions.}
For each row of Table~\ref{tab:myerson}, choose $(a,b)$ as an ordering
of either pair and choose $d$ from the other pair; its remaining entry
is $c$. Testing \eqref{eq:S-myerson-test} in these eight orderings gives
\[
\begin{array}{c|c}
 j&(a,b;c,d)\\ \hline
 1\text{--}11,\ 14,\ 15&\text{none}\\
 12&(1/10,1/6;4/15,1/15)\\
 13&(1/6,3/10;7/15,2/15)
\end{array}
\]
Both assignments have $a>d$, contrary to admissibility.

Conversely, $l=4,t=1$ gives a trivial identity, and $(n,l,t)=(9,3,2)$
gives the one-parameter identity with $\sigma=4/9$. Thus each listed
configuration satisfies $S=1$.
\end{proof}

\begin{proof}[Proof of Theorem~\ref{thm:Q2}]
By Proposition~\ref{prop:a2-formula}, $a_2\in\Q$ and $S(n,l,t)\in\Q$ are equivalent. By Lemma~\ref{lem:S-rational-values}, this is limited to the exception $(n,l,t)=(6,2,1)$ with $S=2/3$, or the case $S=1$. The exception gives
\[
  m=\frac{6-2}{2}=2,
  \qquad k=m-1=1,
\]
with value
\[
  a_2=\frac{k+l}{n}+\frac2nS=\frac36+\frac26\cdot\frac23=\frac{13}{18}.
\]

For the case $S=1$ we use Lemma~\ref{lem:S-one}. From $(n,l,t)=(9,3,2)$ we get $(n,k,m)=(9,1,3)$, with value
\[
  a_2=\frac49+\frac29=\frac23.
\]
Also, in the case $l=4,t=1$ with $n$ even, $n\ge8$,
\[
  m=\frac{n-4}{2}=\frac n2-2,
  \qquad
  k=m-1=\frac n2-3,
\]
and
\[
  a_2=\frac{k+l}{n}+\frac2n
  =\frac{\frac n2-3+4}{n}+\frac2n
  =\frac12+\frac3n.
\]
The reverse direction follows by direct substitution in each case.
\end{proof}

\section{The region \texorpdfstring{$Q_0$}{Q0}}\label{sec:q0}

\begin{theorem}[Classification of $Q_0$]\label{thm:Q0}
In the above setup, $a_0\in\Q$ holds if and only if
\[
  (n,k,m)\in
  \{(6,1,2),(18,3,5),(18,4,6),(30,3,10),(30,5,12)\}.
\]
The corresponding values of $a_0$ are
\[
\begin{array}{c|ccccc}
(n,k,m)&(6,1,2)&(18,3,5)&(18,4,6)&(30,3,10)&(30,5,12)\\
\midrule
 a_0&1/18&1/18&1/9&1/30&1/10
\end{array}.
\]
\end{theorem}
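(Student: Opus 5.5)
The plan is to follow the two-stage template used for $S$ in Lemmas~\ref{lem:S-rational-values} and~\ref{lem:S-one}: first a norm restriction on the possible rational values of $R$, then a classification of each surviving value. By Proposition~\ref{prop:a0-formula}(i), $a_0\in\Q$ is equivalent to $R(n,k,l)\in\Q$. Put $N=2n$ and $\zeta=\exp(2\pi i/N)$. Then
\[
 R=\xi\,\frac{(1-\zeta^{2k})(1-\zeta^{l})}{(1-\zeta^{4})(1-\zeta^{2k+l})}
\]
for a root of unity $\xi$. All four exponents lie in $(0,N)$, because $2k+l=n-2t<n$, and $R>0$. Lemma~\ref{lem:norm-restriction} then gives
\[
v_p(R)=\delta_p(2k)+\delta_p(l)-\delta_p(4)-\delta_p(2k+l),
\]
so $v_p(R)=0$ for $p\ge5$. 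No exponent can have order $2$, since each is strictly below $n=N/2$. Hence parts (ii) and (iii) of that lemma apply at both $p=2$ and $p=3$, and $v_2(R),v_3(R)\in\{-1,0,1\}$. Each value $\pm1$ forces two specified factors to have order exactly $4$ (for $p=2$) or exactly $3$ (for $p=3$), and forces the other two contributions to vanish.

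I would then enumerate the cases as in Lemma~\ref{lem:S-rational-values}.
\begin{itemize}
\item $v_3(R)=-1$ needs $d_N(4)=3$, so $n=6$, giving $(n,k,l)=(6,1,2)$ with $R=1/3$.
\item $v_3(R)=+1$ needs $d_N(2k)=d_N(l)=3$. This means $2k=2n/3$ and $l=2n/3$, which contradicts $2k+l<n$.
\item $v_2(R)=-1$ needs $d_N(4)=4$, so $n=8$, together with $d_N(2k+l)=4$. The few resulting triples can be checked by hand.
\item $v_2(R)=+1$ needs $2k$ and $l$ both of order $4$, i.e.\ both equal to $n/2$. This again contradicts $2k+l<n$.
\end{itemize}
Outside finitely many explicit small-$n$ cases, this leaves only $R=1$.

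For $R=1$ the equation reads
\[
\sin(2k\theta)\sin(l\theta)=\sin(4\theta)\sin((2k+l)\theta).
\]
I would set $a=k/n$, $b=l/(2n)$, $c=2/n$, and $d=\tfrac12-\bigl|\tfrac12-(a+b)\bigr|$, so that all four lie in $(0,1/2]$. Then $d=a+b$ or $a+b+d=1$, and since $a+b<1/2$ only the first relation can hold. Theorem~\ref{thm:Myerson} applies, and I would test the eight pair-preserving orderings against $d=a+b$.
\begin{itemize}
\item Trivial solutions $\{a,b\}=\{c,d\}$ force $a$ or $b$ to equal $a+b$, which is impossible.
\item The one-parameter family $(\tfrac16,\sigma;\tfrac\sigma2,\tfrac{1-\sigma}2)$ yields a finite list of affine equations in $\sigma$. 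Each root gives a candidate $(n,k,l)$, which is then checked against admissibility, including parity of $n-l$ and $l+2t\le n-2$.
\item The fifteen sporadic rows of Table~\ref{tab:myerson} are checked one ordering at a time. These rows involve denominators $14,20,21,30$ and $12$, which is presumably where the $n=18$ and $n=30$ configurations come from.
\end{itemize}
All of this is finite exact arithmetic of the kind done by \texttt{verify\_myerson.py}. Finally, each surviving triple is converted back via $m=(n-l)/2$ and $k$, and $a_0=k/n-2R/n$ is evaluated to confirm the five listed values. For example, $(6,1,2)$ has $R=1/3$ and so $a_0=1/6-1/9=1/18$.

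The main obstacle is the bookkeeping in the first stage: making sure every way of reaching $v_p=\pm1$ is enumerated, including mixed cases where $v_2$ and $v_3$ are both nonzero. The residual values $R\ne1$, such as $1/3$ or $1/2$, may occur for several small $n$ and not only $n=6$. I would handle these by listing all $n$ for which some relevant exponent can have order $3$ or $4$, and checking those finitely many triples directly.
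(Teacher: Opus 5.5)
Your proposal is correct and is essentially the paper's proof: the norm restriction of Lemma~\ref{lem:R-rational-values}, followed by the Myerson matching of Lemma~\ref{lem:R-one} against $d=a+b$. Your own case analysis already removes the worry about residual values, since $n=8$ fails at once (every $\delta_2$ is nonzero when $N=16$) and mixed cases are handled valuation by valuation, so the only rational $R\neq1$ is $R=1/3$ at $(6,1,2)$; also, contrary to your guess, the two $n=18$ configurations come from the one-parameter family at $\sigma=2/9$, the two $n=30$ ones come from sporadic row~12, and row~13 is discarded because it forces $n=15$ with non-integral $k$.
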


Throughout this section put
\[
 A=2k,\qquad B=l,\qquad C=A+B.
\]
Thus $A\ge2$ is even, $B\ge1$, $C<n$, and
\[
 R=\frac{\sin(A\theta)\sin(B\theta)}{\sin(4\theta)\sin(C\theta)}.
\]
We first restrict its rational value and then use Myerson's theorem to classify the case $R=1$.

\subsection{Restriction of the rational value}

\begin{lemma}\label{lem:R-rational-values}
If $R(n,k,l)\in\Q$, then $R=1$, unless
\[
 (n,k,l)=(6,1,2),\qquad R=\frac13.
\]
\end{lemma}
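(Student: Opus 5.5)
We follow the same two-stage pattern as in Lemma~\ref{lem:S-rational-values}. Set $N=2n$, $\zeta=\exp(2\pi i/N)$, so that $\sin(j\theta)$ corresponds to $1-\zeta^{j}$ up to a root of unity and a factor $2i$. The powers of $2i$ cancel, since numerator and denominator each have two sine factors, and we get
\[
 R=\xi\,\frac{(1-\zeta^{A})(1-\zeta^{B})}{(1-\zeta^{4})(1-\zeta^{C})}
\]
for some root of unity $\xi$. All exponents lie strictly between $0$ and $N$, because $A,B\le C<n$ and $n\ge5$. Since $R>0$, Lemma~\ref{lem:norm-restriction} applies to $q=R\in\Q_{>0}$ and gives
\[
 v_p(q)=\delta_p(A)+\delta_p(B)-\delta_p(4)-\delta_p(C).
\]
By part~(i) of that lemma, $v_p(q)=0$ for $p\ge5$. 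No exponent equals $n$, so order $2$ never occurs and part~(iii) is available for both $p=2$ and $p=3$.

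For $p=2$ we argue as follows.
\begin{itemize}[label=--]
\item If $v_2(q)=1$, then $d_N(A)=d_N(B)=4$. With $A,B<n$ this forces $A=B=n/2$, so $C=n$, which is excluded.
\item If $v_2(q)=-1$, then $d_N(4)=4$, so $n=8$, and $d_{16}(C)=4$ with $C<8$ gives $C=4$. Now $A\ge2$ is even and $B=C-A\ge1$, and $B=l\equiv n\pmod 2$ is even, so $A=B=2$. The valuation is then $1/4+1/4-1/2-1/2\notin\Z$, a contradiction.
\end{itemize}

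For $p=3$ we argue similarly.
\begin{itemize}[label=--]
\item If $v_3(q)=1$, then $A=B=2n/3$, which contradicts $A+B<n$.
\item If $v_3(q)=-1$, then $d_N(4)=3$, forcing $N=12$, i.e.\ $n=6$, and $d_{12}(C)=3$ with $C<6$ gives $C=4$. Parity ($l$ even, $A$ even) forces $A=B=2$, i.e.\ $(n,k,l)=(6,1,2)$, and direct computation gives $R=\sin^2(\pi/6)/\sin^2(\pi/3)=1/3$.
\end{itemize}

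Outside this exception, $|v_p(q)|$ can only be nonzero for $p=2$, and then only as $v_2(q)=\pm1$, both of which are excluded above. Hence $q=1$.

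The main obstacle is making sure the cases $v_2=\pm1$ are genuinely exhausted. The danger is a nonzero $\delta_2$ of size $1/4$ or smaller adding up to an integer. However, with only two terms on each side, each at most $1/2$, reaching $\pm1$ needs both terms on that side equal to $1/2$ and the other side zero. That is exactly the content of Lemma~\ref{lem:norm-restriction}(iii), so the argument is a short check of orders together with the constraints $C<n$, $A$ even, $B\equiv n\pmod 2$. Unlike the case of $S$, no value $q=2$ survives, so no analogue of Lemma~\ref{lem:tan-obstruction} should be needed. We would double-check this by verifying that $v_2(q)=1$ really is impossible rather than merely forcing a special relation.
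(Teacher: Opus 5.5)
Your proposal is correct and follows essentially the same route as the paper: the same cyclotomic factorization (the root of unity is in fact $\xi=\zeta^2$), the same valuation formula, and the same exclusion of $v_2=\pm1$ and $v_3=1$, with the only exception coming from $v_3=-1$ at $(n,k,l)=(6,1,2)$. The one variation is that you use the parity $B\equiv n\pmod 2$ where the paper uses $B\ge1$ together with $A$ even, and this makes no difference.
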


\begin{proof}
First assume $R=q\in\Q$. Setting $N=2n$ and $\zeta=\exp(2\pi i/N)$,
\[
  R=\zeta^2\frac{(1-\zeta^A)(1-\zeta^B)}{(1-\zeta^4)(1-\zeta^C)}.
\]
By Lemma~\ref{lem:norm-restriction}, for each prime $p$,
\[
  v_p(q)=\delta_p(A)+\delta_p(B)-\delta_p(4)-\delta_p(C).
\]
Lemma~\ref{lem:norm-restriction}(i) gives $v_p(q)=0$ for $p\ge5$.

Next consider $p=2$. Since $A,B,C,4$ are all less than $N/2=n$, the case $d_N(r)=2$ does not occur, so the nonzero $\delta_2$ is at most $1/2$. If $v_2(q)=1$, then the two positive terms must both equal $1/2$. Hence $d_N(A)=d_N(B)=4$, and from $0<A,B<N/2$ we get $A=B=N/4$. But then $C=A+B=N/2=n$, contradicting $C<n$. If $v_2(q)=-1$, then the two negative terms must both equal $1/2$. In particular $d_N(4)=4$. Under $n\ge5$ this forces $N=16$. Furthermore, from $d_N(C)=4$ and $C<n=8$ we get $C=4$. Hence $A+B=4$; since $A\ge2$ is even and $B\ge1$, we have $A=B=2$. But in this case the right-hand side is
\[
  \delta_2(2)+\delta_2(2)-\delta_2(4)-\delta_2(4)
  =\frac14+\frac14-\frac12-\frac12
  =-\frac12,
\]
which is not an integer. Hence $v_2(q)=0$.

Finally consider $p=3$. The nonzero $\delta_3$ is at most $1/2$. If $v_3(q)=1$, then the two positive terms must both equal $1/2$. Hence $d_N(A)=d_N(B)=3$, and from $0<A,B<N/2$ we get $A=B=N/3$. But then $C=2N/3>N/2=n$, a contradiction. If $v_3(q)=-1$, then the two negative terms must both equal $1/2$. In particular $d_N(4)=3$, and under $n\ge5$ this forces $N=12$. Furthermore, from $d_N(C)=3$ and $C<n=6$ we get $C=4$. Hence $A+B=4$, and by the conditions $A=B=2$. This corresponds to
\[
  (n,k,l)=(6,1,2),
\]
and in this case
\[
  R(6,1,2)=\frac13.
\]
Otherwise $v_p(q)=0$ for all primes $p$. Since $q>0$, we get $q=1$.

\end{proof}

\subsection{Classification of \texorpdfstring{$R=1$}{R=1}}

\begin{lemma}\label{lem:R-one}
The equality $R(n,k,l)=1$ holds if and only if
\[
 (n,k,l)=(18,3,8),(18,4,6),(30,3,10),(30,5,6).
\]
\end{lemma}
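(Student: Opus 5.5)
We will mirror the proof of Lemma~\ref{lem:S-one}: rewrite $R=1$ as an equality of two products of two sines with angles in $(0,1/2]$, and then apply Theorem~\ref{thm:Myerson}. Since $\theta=\pi/(2n)$, the equation $R=1$ reads
\[
 \sin(\pi x_A)\sin(\pi x_B)=\sin(\pi c)\sin(\pi x_C),
 \qquad x_A=\frac{A}{2n},\ x_B=\frac{B}{2n},\ c=\frac2n,\ x_C=\frac{C}{2n}.
\]
All four quantities are positive and less than $1/2$, because $A,B,C<n$ and $n\ge5$. We therefore take $(x_1,x_2;x_3,x_4)=(a,b;c,d)$ with $a=A/(2n)$, $b=B/(2n)$, $c=2/n$, $d=C/(2n)$. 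These satisfy the linear side condition
\[
 d=a+b,
\]
and in particular $d>a,b$. Conversely, a solution determines $n=2/c$, $k=na$ and $l=2nb$. We must still check integrality, $k\ge1$, $l\ge1$, $n-l$ even, and $l+2t\le n-2$, where $t=m-k$ and $m=(n-l)/2$. These conditions amount to $A$ even and $C\le n-2$.

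\emph{Trivial solutions.} Here $\{a,b\}=\{c,d\}$. Since $d=a+b$ exceeds both $a$ and $b$, it cannot equal either of them, so no trivial solution exists.

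\emph{The one-parameter family.} The quadruple has the form $(1/6,\sigma;\sigma/2,(1-\sigma)/2)$ up to the allowed permutations. For each of the eight orderings, $d=a+b$ together with the requirement that $c$ be the entry playing the role of $2/n$ gives affine equations in $\sigma$. For example, $\{a,b\}=\{1/6,\sigma\}$ with $d=(1-\sigma)/2$ gives $\sigma=2/9$ and $c=1/9$, so $n=18$. Then $k$ comes from $A=2k$ being the entry $1/6$ or $2/9$ scaled by $2n=36$. Taking $A=6$ gives $k=3$, $B=8$, $l=8$. Taking $A=8$ gives $k=4$, $B=6$, $l=6$. These are exactly $(18,3,8)$ and $(18,4,6)$. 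The other orderings (with $d=\sigma/2$, or $d\in\{1/6,\sigma\}$) have to be checked one at a time. We expect them to give either negative or out-of-range $\sigma$, or values violating evenness of $A$, $n-l$ even, or $C\le n-2$.

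\emph{Sporadic solutions.} For each of the fifteen rows of Table~\ref{tab:myerson}, we pick $d$ as an entry of one pair such that the other pair sums to it, and then take $c$ as the remaining entry of $d$'s pair. For instance, row~6 gives $1/30+3/10\ne$ any entry, but with the pairs swapped, $1/15+\ldots$ is to be tested. We expect exactly the rows producing $n=30$ to survive, with $c=1/15$ and $\{a,b\}=\{1/10,1/6\}$, $d=4/15$, which is row~12 read as $(1/10,1/6;1/15,4/15)$. The two assignments $A/60=1/10$ and $A/60=1/6$ give $A=6$ or $A=10$, hence $(30,3,10)$ and $(30,5,6)$.

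This finite case check over the permutations of all fifteen rows and of the family is the main labour and the main risk of error. We will carry it out as a table as in Lemma~\ref{lem:S-one}, and cross-check it with the exact-arithmetic script. Finally, the converse direction follows by substituting the four triples into the respective Myerson identities.
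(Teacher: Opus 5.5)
Your route is the paper's route. You substitute $a=A/(2n)$, $b=B/(2n)$, $c=2/n$, $d=C/(2n)$ with the side condition $d=a+b$, and then check cases against Theorem~\ref{thm:Myerson}. The gap is that this case check is the entire content of the lemma beyond the setup, and you only announce it (``we expect''). Your prediction for the sporadic rows is also not what happens. Apply the test ``the sum of one pair equals an entry of the other pair'' to Table~\ref{tab:myerson}. Two rows pass, not one:
\begin{itemize}
\item Row~12, read as $(1/10,1/6;1/15,4/15)$, gives $n=30$ and $\{A,B\}=\{6,10\}$, as you predicted.
\item Row~13, read as $(1/6,3/10;2/15,7/15)$, also passes, since $1/6+3/10=7/15$. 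Here $c=2/15$, so $n=15$ and $\{A,B\}=\{5,9\}$. This candidate is eliminated only by the parity requirement $A=2k\equiv0\pmod 2$, and the proof must record that step explicitly.
\end{itemize}
The other thirteen rows fail the linear test outright. Your admissibility summary ``$A$ even and $C\le n-2$'' also drops the condition $n\equiv B\pmod 2$. That condition holds for every candidate here, but it has to be checked. Note that $l+2t\le n-2$ is equivalent to $k\ge1$, while $C\le n-2$ is the condition $t\ge1$.

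For the one-parameter family you do not need eight separate orderings. Since $d=a+b<1/2$, the pair $\{a,b\}$ cannot be $\{\sigma/2,(1-\sigma)/2\}$, whose sum is $1/2$. Hence $\{a,b\}=\{1/6,\sigma\}$. Then $d=1/6+\sigma>\sigma/2$ forces $d=(1-\sigma)/2$ and $c=\sigma/2$. This gives $\sigma=2/9$, $n=18$ and $\{A,B\}=\{6,8\}$. This two-line argument is how the paper disposes of the family, and it confirms your expectation for the remaining orderings. With these two pieces filled in, your argument becomes the paper's proof.
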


\begin{proof}
Put
\[
 a=\frac{A}{2n},\qquad b=\frac{B}{2n},\qquad
 c=\frac2n,\qquad d=\frac{C}{2n}.
\]
Then $0<a,b,c,d<1/2$ and $d=a+b$. Hence $R=1$ is equivalent to
\[
 \sin(\pi a)\sin(\pi b)=\sin(\pi c)\sin(\pi d),
 \qquad d=a+b<\frac12.
\]
We apply Theorem~\ref{thm:Myerson}.

\emph{Trivial solutions.}
These are impossible because $d>a,b$.

\emph{The one-parameter family.}
The pair $\{a,b\}$ cannot be
$\{\sigma/2,(1-\sigma)/2\}$, whose sum is $1/2$.
Thus
\[
 \{a,b\}=\{1/6,\sigma\},\qquad
 \{c,d\}=\{\sigma/2,(1-\sigma)/2\}.
\]
Since $d=a+b>\sigma/2$, we must have
\[
 d=\frac{1-\sigma}{2},\qquad c=\frac{\sigma}{2},\qquad
 \frac16+\sigma=\frac{1-\sigma}{2}.
\]
Therefore $\sigma=2/9$, and
\[
 n=\frac2c=18,\qquad \{A,B\}=\{6,8\}.
\]

\emph{Sporadic solutions.}
For an ordering of a row of Table~\ref{tab:myerson} to satisfy $d=a+b$,
the sum of one pair must equal an entry of the other pair. The complete
comparison is
\[
\begin{array}{c|c|c|c}
 j&\{a,b\}&c&d\\ \hline
 1\text{--}11,\ 14,\ 15&\text{none}&\text{--}&\text{--}\\
 12&\{1/10,1/6\}&1/15&4/15\\
 13&\{1/6,3/10\}&2/15&7/15
\end{array}
\]
Row~12 gives $n=30$ and $\{A,B\}=\{6,10\}$.
Row~13 gives $n=15$ and $\{A,B\}=\{5,9\}$, contradicting
$A=2k\equiv0\pmod2$.

Finally, $A=2k$ and $B=l$ give the four triples in the statement.
Each is admissible. The two triples with $n=18$ realize the
one-parameter identity with $\sigma=2/9$, and those with $n=30$
realize row~12. This also proves the converse.
\end{proof}

\begin{proof}[Proof of Theorem~\ref{thm:Q0}]
By Proposition~\ref{prop:a0-formula}, $a_0$ is rational exactly when $R$ is rational. Lemmas~\ref{lem:R-rational-values} and \ref{lem:R-one} give the exception $(n,k,l)=(6,1,2)$ with $R=1/3$ and the four cases with $R=1$. The conversion $m=(n-l)/2$ gives precisely the five configurations in the statement. Substitution into
\[
 a_0=\frac{k}{n}-\frac2nR
\]
gives their asserted values, and the same lemmas establish the converse.
\end{proof}

\section{The lateral regions}\label{sec:lateral}

\begin{theorem}[Classification of the lateral regions]\label{thm:Q1Opp}
In the above setup, the common area ratio $a_1=a_3$ is rational if and only if
\[
  (n,k,m)=(6,1,2)\quad\text{or}\quad(12,1,4).
\]
Its values are $1/9$ and $1/6$, respectively.
\end{theorem}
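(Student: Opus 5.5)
By Proposition~\ref{prop:a1-formula}, $a_1$ is rational exactly when
\[
 T=\frac{2\sin^2(l\theta)\tan(2t\theta)}{\sin(4\theta)}
\]
is rational. The plan therefore has the same two stages as for $R$ and $S$: first a norm restriction of the possible rational values of $T$, then a classification of the surviving equations.

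\emph{Stage 1: norm restriction.} With $N=2n$ and $\zeta=\exp(2\pi i/N)$, write
\[
 T=\xi\frac{(1-\zeta^l)^2(1-\zeta^{4t})}{(1-\zeta^4)(1-\zeta^{n+4t})}
\]
for a suitable root of unity $\xi$. Here $1+\zeta^{4t}=1-\zeta^{n+4t}$, since $\zeta^n=-1$. Lemma~\ref{lem:norm-restriction} then gives
\[
 v_p(q)=2\delta_p(l)+\delta_p(4t)-\delta_p(4)-\delta_p(n+4t).
\]
Since there are three numerator factors, part (i) of that lemma does not apply directly.
\begin{itemize}
\item For $p\ge5$: the positive part is at most $3/(p-1)<1$ when $p\ge5$ except $p=5$ would need $\delta$'s all equal $1/4$; for $p\geq 5$ the sum is at most $3/4$ and integrality gives $0$.
\item For $p=3$: $v_3\in\{-1,0,1\}$.
\item For $p=2$: there are order-$2$ cases to handle. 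These are $4t=n$, i.e.\ $2t\theta=\pi/4$, and $n+4t\equiv n\pmod{2n}$, which is impossible.
\end{itemize}
Admissibility forces $l$ to have the parity of $n$, and $t<n/2$. Each extremal case pins down the order of $l$, $4$ or $4t$ (for instance $d_N(4)=4$ gives $n=8$, and $d_N(4)=3$ gives $n=6$). Working through these, I expect to reduce $q$ to a short list, something like $q\in\{1,2,1/2,3,\dots\}$ times small powers. Each extremal case that fixes $n$ (such as $n=6$ or $n=8$) is settled by direct computation; this is where $(6,1,2)$ should appear. The case $4t=n$ makes $\tan(2t\theta)=1$, so $T=2\sin^2(l\theta)/\sin(4\theta)$ is handled by a two-sine comparison. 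I expect $(12,1,4)$ to arise around there: it has $t=3$, $n=12$, so $4t=n$, and then $T=2\sin^2(\pi/12)/\sin(\pi/6)$, which needs checking.

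\emph{Stage 2: the surviving equation.} The generic surviving equation is $T=q$ with $q$ a small rational. Clearing denominators gives
\[
 2\sin^2(l\theta)\sin(2t\theta)=q\sin(4\theta)\cos(2t\theta).
\]
When $q=1$, or $q$ is a power of $2$ absorbed by writing $1/2=\sin(\pi/6)$, this becomes an equality of two three-sine products. Writing cosines as sines of complementary angles, I would arrange the angles, divided by $\pi$, as $u=(l/(2n),\,l/(2n),\,t/n)$ and $v=(2/n,\,1/2-t/n,\,w)$, with an extra factor $w$ such as $1/6$ or $1/2$ chosen so that the six entries sum to $1$. For example, with $q=1$ one can split $\sin(4\theta)=2\sin(2\theta)\cos(2\theta)$ to get a sum-$1$ normalization. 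Theorem~\ref{thm:PR} then applies. For the trivial solutions, the repeated entry $l/(2n)$ must match two entries of the other triple, which forces concrete small configurations. For the four one-parameter families and the $65$ sporadic solutions, each must be tested against the rigid shape of our triples: a repeated entry, entries summing to $1/2$, and the parity constraint $n\equiv l\pmod 2$. This is the finite matching deferred to Appendix~\ref{app:finite-matching}.

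\emph{Main obstacle.} The main obstacle is the $p=2$ analysis, because of the squared factor: $2\delta_2(l)$ can reach $1$ by itself when $l=n/2$. This case has $\sin^2(l\theta)=1/2$, so $T=\tan(2t\theta)/\sin(4\theta)$, which is handled like Lemma~\ref{lem:tan-obstruction}. The second difficulty is arranging every surviving equation into exactly the Poonen--Rubinstein normalization with sum $1$, and carrying out the finite comparison against their tables. I would do that comparison by exact computation, as in \texttt{verify\_myerson.py}.
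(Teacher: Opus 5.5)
There is a genuine gap: your Stage~2 never reaches the setting of Theorem~\ref{thm:PR}. That theorem classifies equal three-sine products only under the side condition $u_1+u_2+u_3+v_1+v_2+v_3=1$. Your extra angle $w$ is not free, since it is forced by $\sin(\pi w)=q/2$; for $q=1$ this gives $w=1/6$. Your triples $(l/(2n),l/(2n),t/n)$ and $(2/n,1/2-t/n,w)$ then sum to $(l+2)/n+2/3$, which equals $1$ only when $n=3(l+2)$. For the actual solution $(n,l,t)=(12,4,3)$ the sum is $7/6$. Splitting $\sin(4\theta)=2\sin(2\theta)\cos(2\theta)$ instead gives $(l/(2n),l/(2n),t/n)$ and $(1/n,1/2-1/n,1/2-t/n)$, with sum $1+l/n$. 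So the equation $T=1$ for general $t$ (a two-sine equation only when $4t=n$) is left unclassified, and the finite matching you defer has no valid target.

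The missing idea is an identity, not a rearrangement. Put $B=l$ and $C=n-2t$, so that $\tan(2t\theta)=\cot(C\theta)$, and set $x=B\theta$, $y=C\theta$, $z=2\theta$. Then
\[
\sin(x-z)\sin x\cos(y+z)-\cos(x-z)\sin z\sin(y-x)=\sin^2x\cos y-\sin z\cos z\sin y .
\]
Twice the right side is $2\sin^2(B\theta)\cos(C\theta)-\sin(4\theta)\sin(C\theta)$, whose vanishing is exactly $T=1$. The left side compares products at $u=((B-2)/(2n),B/(2n),(n-C-2)/(2n))$ and $v=((n-B+2)/(2n),1/n,(C-B)/(2n))$, and these do sum to $1$. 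Positivity of $u_1$ and $u_3$ requires first excluding $B\le2$ and $n-C=2$ by elementary inequalities. The matching against the trivial solutions, the four families and the $65$ sporadic rows then uses the rigid tests $u_1+v_1=1/2$ and $u_2-u_1=v_2$, not the repeated entry of your triples. Your Myerson treatment of the subcase $4t=n$ does isolate $(12,1,4)$ correctly there, but nothing forces that subcase.

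Stage~1 also needs repair and completion. Since $\zeta=e^{2i\theta}$, the factors of $\tan(2t\theta)$ are $1-\zeta^{2t}$ and $1+\zeta^{2t}=1-\zeta^{n+2t}$. Your $\zeta^{4t}$ and $\zeta^{n+4t}$ encode $\tan(4t\theta)$, and at $4t=n$ your denominator is $0$. With the correct exponents no factor has order $2$, so the order-$2$ case you worry about is spurious. Lemma~\ref{lem:norm-restriction}(i) applies as stated, since it allows $h=3$.

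The following must be proved rather than expected:
\begin{itemize}
\item $v_2(q)\ge0$. The only candidate for $v_2=-1$ is $n=8$, $t=2$, $l=2$, where the valuation sum is $0$; so $q=1/2$ never occurs.
\item $v_2(q)=1$ forces $l=n/2$.
\item $v_3(q)=1$ forces $l=2n/3$.
\item $v_3(q)=-1$ forces $(n,l,t)=(6,2,1)$, with $T=1/3$.
\end{itemize}
Hence $q\in\{1,2,3\}$ apart from that exception; $q=6$ would need both $l=n/2$ and $l=2n/3$. The cases $q=2,3$ both reduce to $\tan(t\pi/n)=2\sin(2\pi/n)$ with $t<n/4$, which Lemma~\ref{lem:tan-obstruction} excludes.

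Finally, your check of $(12,1,4)$ is miscomputed. Here $l\theta=4\pi/24=\pi/6$, so $T=2\sin^2(\pi/6)/\sin(\pi/6)=1$. Your value $2\sin^2(\pi/12)/\sin(\pi/6)=2-\sqrt3$ is irrational.
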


Set $B=l$ and $C=n-2t=2k+l$. The conditions are
\[
  1\le B<C<n,
  \qquad
  C-B=2k\ge2,
  \qquad
  n-C=2t\ge2,
  \qquad
  B\equiv C\equiv n\pmod2.
\]
Since $C\theta=\pi/2-2t\theta$, the quotient $T=T(n,l,t)$ has the equivalent expression
\[
  T(n,l,t)=T(n,B,(n-C)/2)
  =\frac{2\sin^2(B\theta)\cos(C\theta)}{\sin(4\theta)\sin(C\theta)}.
\]

The proof again separates the restriction of the rational value from the classification of the value $1$.

\subsection{Restriction of the rational value}

\begin{lemma}\label{lem:T-rational-values}
Under the displayed conditions, if $T(n,B,(n-C)/2)\in\Q$, then $T=1$, unless
\[
 (n,B,C)=(6,2,4),\qquad T=\frac13.
\]
\end{lemma}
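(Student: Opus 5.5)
The approach mirrors Lemmas~\ref{lem:S-rational-values} and \ref{lem:R-rational-values}: first write $T$ as a root of unity times a quotient of cyclotomic factors, then bound $v_p(q)$ prime by prime with Lemma~\ref{lem:norm-restriction}, and finally rule out every surviving value other than $1$.

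\emph{Cyclotomic form.} Using $n\theta=\pi/2$, I rewrite $\cos(C\theta)=\sin((n-C)\theta)=\sin(2t\theta)$. Then
\[
 T=\frac{2\sin^2(B\theta)\sin((n-C)\theta)}{\sin(4\theta)\sin(C\theta)} .
\]
Put $N=4n$ and $\zeta=\exp(2\pi i/N)$, so that $\sin(x\theta)$ is a root of unity times $(1-\zeta^{2x})/2$. The numerator has three sines and the denominator two. The leftover factor $1/2$ cancels the explicit $2$, giving
\[
 q=T=\xi\,\frac{(1-\zeta^{2B})^2(1-\zeta^{2(n-C)})}{(1-\zeta^{8})(1-\zeta^{2C})}
\]
with $\xi$ a root of unity. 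Equivalently, one may work with $N=2n$ and exponents $B,B,n-C;4,C$. All exponents lie strictly between $0$ and $n=N/2$, so order $2$ never occurs. By Lemma~\ref{lem:norm-restriction}, for every prime $p$,
\[
 v_p(q)=2\delta_p(B)+\delta_p(n-C)-\delta_p(4)-\delta_p(C).
\]
Since $h=3$ and $s=2$, part (i) of that lemma gives $v_p(q)=0$ for $p\ge5$.

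\emph{The primes $3$ and $2$.} For $p=3$ the positive side is at most $3/2$, so $v_3(q)\in\{-1,0,1\}$.
\begin{itemize}
\item $v_3(q)=-1$ forces $d_N(4)=d_N(C)=3$, hence $n=6$ and $C=4$. The parity and size conditions then give $B=2$, and direct evaluation yields $T(6,2,1)=1/3$. This is the stated exception.
\item $v_3(q)=1$ needs numerator contributions summing to an integer. Since the order-$9$ contributions are $1/6$ and higher $3$-powers give less, the only possibility is $\delta_3(B)=1/2$, i.e.\ $B=2n/3$ with denominators contributing nothing. Then $C>B=2n/3$ with $C\equiv n \pmod 2$, and I would check that this case contradicts the conditions or else makes $\delta_3(n-C)$ or $\delta_3(C)$ nonzero.
\end{itemize}
For $p=2$ the same pattern applies.
\begin{itemize}
\item $v_2(q)=-1$ forces $d_N(4)=d_N(C)=4$, hence $n=8$ and $C=4$. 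This leaves a finite check, which I expect to give a non-integer value such as $1/4+1/4-1/2-1/2$, as in the earlier lemmas.
\item $v_2(q)\ge1$ is the delicate part. Because $B$ is counted twice, sums such as $2\cdot\tfrac14+\tfrac12$ or $2\cdot\tfrac12$ can reach $1$. The case $2\delta_2(B)=1$ means $B=n/2$. Order $8$ for $B$ combined with order $4$ for $n-C$ means $B\in\{n/4,3n/4\}$ and $n-C=n/2$, i.e.\ $C=n/2$.
\end{itemize}

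\emph{Excluding the surviving values.} In each such $2$-adic case I would substitute back into $T$; with $B$ or $C$ equal to $n/2$, the factor $\sin^2(B\theta)$ or $\cos(C\theta)$ becomes explicit. This should reduce $T=2$ (or $T=4$, or $T=3$ in the leftover $3$-adic subcase) to an elementary equation in $x=\pi/n$, to be killed by an inequality analogous to Lemma~\ref{lem:tan-obstruction}. Once all other prime powers are excluded, $q>0$ with $v_p(q)=0$ for every $p$ gives $q=1$.

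I expect the main obstacle to be this $p=2$ positive-valuation casework. The doubled factor $(1-\zeta^{B})^2$ breaks the clean ``two halves'' mechanism of Lemma~\ref{lem:norm-restriction}(iii). It allows mixed contributions ($1/2$, $1/4$, $1/8$ with multiplicity), and hence candidate values $2$ and possibly $4$, each needing its own trigonometric exclusion.
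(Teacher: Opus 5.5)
Your route is the paper's: the same cyclotomic factorization, the same valuation formula, and the same treatment of $p\ge5$ and of $v_3(q)=-1$. The paper writes the cosine factor as $1-\zeta^{n+C}$, which has the same order as your $1-\zeta^{n-C}$ because $\zeta^{n+C}=\zeta^{-(n-C)}$. However, the proposal stops before the decisive steps, and one mechanism you hope for does not exist.

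\textbf{The case $v_3(q)=1$.} No contradiction can come from valuations here. The triple $(n,B,C)=(12,8,10)$ satisfies all the conditions and has $v_3=1$, $v_2=0$; indeed $T=3(2-\sqrt3)$. Your aside that the denominators then contribute nothing is also false: for $(18,12,14)$ one has $\delta_3(4)=\delta_3(n-C)=1/6$. It is not needed anyway. All you need is $B=2n/3$. Then $2\sin^2(B\theta)=3/2$, so $T=3$ is equivalent to $\cot(C\theta)=2\sin(4\theta)$. Writing $r=(n-C)/2$ and $x=\pi/n$, this is $\tan(rx)=2\sin(2x)$, and $C>2n/3$ gives $1\le r<n/6$. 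This is exactly the equation excluded by Lemma~\ref{lem:tan-obstruction}, not an analogue of it.

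\textbf{The case $v_2(q)=1$.} Your mixed subcase dies by valuations alone. Indeed $n-C=n/2$ means $C=n/2$, which adds $\delta_2(C)=1/2$ to the denominator and caps the right-hand side at $1/2$. Hence $B=n/2$ and $2\sin^2(B\theta)=1$. Then $T=2$ becomes the same equation, now with $1\le r<n/4$ because $C>B=n/2$.

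\textbf{Closing the candidate list.} You also need to close the list of candidate values explicitly, which the proposal does not do. Every nonzero $\delta_2$ here is at most $1/2$, so the positive $2$-adic contribution is at most $2\cdot\frac12+\frac12=\frac32$. Integrality then gives $v_2\le1$, so $T=4$ never arises.

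The case $v_2=-1$ requires the positive side to vanish. At its only candidate, $(n,B,C)=(8,2,4)$, one has $\delta_2(n-C)=1/2$. The valuation there is $2\cdot\frac14+\frac12-\frac12-\frac12=0$, an integer rather than the non-integer you predicted, though the conclusion is the same.

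So away from $(6,2,4)$ the only candidates are $q\in\{1,2,3,6\}$. The value $q=6$, which your plan never mentions, must be excluded separately: it would force both $B=n/2$ and $B=2n/3$. With these additions the argument is complete and coincides with the paper's proof.
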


\begin{proof}
Let $T=q\in\Q$, and set $N=2n$, $\zeta=\exp(2\pi i/N)$.
\[
  T=\zeta^{2-B}\frac{(1-\zeta^B)^2(1-\zeta^{C+n})}{(1-\zeta^4)(1-\zeta^C)}.
\]
By Lemma~\ref{lem:norm-restriction},
\[
  v_p(q)=2\delta_p(B)+\delta_p(C+n)-\delta_p(4)-\delta_p(C).
  \tag{\ref{thm:Q1Opp}.1}
\]
Here the numerator has three factors, including the repeated factor $(1-\zeta^B)$. Lemma~\ref{lem:norm-restriction}(i) still gives $v_p(q)=0$ for $p\ge5$.

Consider $p=2$. For $B,C,4$, the case $d_N(r)=2$ does not occur, and for $C+n$ the case $d_N(C+n)=2$ does not occur either. Hence the nonzero $\delta_2$ is at most $1/2$. If $v_2(q)<0$, then $v_2(q)=-1$, and the two negative terms must both be $1/2$. In particular $d_N(4)=4$, so $N=16$, and from $d_N(C)=4$ and $C<n=8$ we get $C=4$. From $C-B\ge2$ and the parity condition, $B=2$. In this case
\[
  2\delta_2(B)+\delta_2(C+n)-\delta_2(4)-\delta_2(C)
  =2\cdot\frac14+\frac12-\frac12-\frac12=0,
\]
so $v_2(q)\ne-1$. Hence a negative $2$-valuation does not occur. Also, if $v_2(q)=1$, then $\delta_2(B)=1/2$ is required. Indeed, if $\delta_2(B)<1/2$ then $2\delta_2(B)\le1/2$. If $\delta_2(C+n)\le1/4$, the positive contributions sum to at most $3/4$. Otherwise $\delta_2(C+n)=1/2$, which gives $C=n/2$ and hence also $\delta_2(C)=1/2$, so the right-hand side is at most $1/2$. Neither case reaches $1$. Hence $d_N(B)=4$, and from $0<B<n=N/2$,
\[
  v_2(q)=1\quad\Longrightarrow\quad B=\frac n2.
\]

Next consider $p=3$. If $v_3(q)=-1$, the two negative terms must both be $1/2$. In particular $d_N(4)=3$, so under $n\ge5$ this forces $N=12$, i.e. $n=6$. Furthermore, from $d_N(C)=3$ and $C<n=6$ we get $C=4$, and by the conditions $B=2$. Thus $(n,B,C)=(6,2,4)$ and $t=(n-C)/2=1$, giving
\[
  T=\frac13.
\]
On the other hand, if $v_3(q)=1$, then $\delta_3(B)=1/2$ is required: otherwise $\delta_3(B)\le1/6$ and the positive contributions sum to at most $5/6$. Hence $d_N(B)=3$, and from $0<B<n=N/2$,
\[
  v_3(q)=1\quad\Longrightarrow\quad B=\frac{2n}{3}.
\]
For $p=2,3$, the three positive contributions sum to at most $3/2$ and the two negative contributions to at most $1$, so integrality gives $-1\le v_p(q)\le1$. Therefore, excluding the exception $(n,B,C)=(6,2,4)$, the candidates for the rational value are limited to
\[
  q=1,
  \quad q=2,
  \quad q=3,
  \quad q=6.
\]
However, $q=6$ is the case where $v_2(q)>0$ and $v_3(q)>0$ occur simultaneously. In this case the former requires $B=n/2$ and the latter $B=2n/3$ at the same time, which are incompatible. Hence in fact only $q=1,2,3$ need be examined.

If $q=2$, then by the above necessary condition for the $2$-valuation, $B=n/2$. In this case $2\sin^2(B\theta)=1$, and $T=2$ gives
\[
  \cot(C\theta)=2\sin(4\theta).
\]
Set $r=(n-C)/2$ and $x=\pi/n$. Since $n-C\ge2$ and $C>B=n/2$, we have $1\le r<n/4$, and
\[
  \tan(rx)=2\sin(2x).
\]
Lemma~\ref{lem:tan-obstruction} excludes this equation. The case $q=3$ likewise gives $B=2n/3$ by the necessary condition for the $3$-valuation. Then $2\sin^2(B\theta)=3/2$, so $T=3$ reduces to the same equation; since $C>B=2n/3$, we now have $1\le r<n/6<n/4$. Therefore
\[
  T\in\Q
  \quad\Longrightarrow\quad
  T=1
  \quad\text{or}\quad
  (n,B,C)=(6,2,4),\ T=\frac13.
\]

\end{proof}

\subsection{Classification of \texorpdfstring{$T=1$}{T=1}}

\begin{lemma}\label{lem:T-one}
Under the same conditions,
\[
 T(n,B,(n-C)/2)=1\quad\Longleftrightarrow\quad(n,B,C)=(12,4,6).
\]
\end{lemma}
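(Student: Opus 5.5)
The plan is to run the same two-stage scheme as for $S$ and $R$, with Myerson's theorem replaced by the Poonen--Rubinstein classification. After Lemma~\ref{lem:T-rational-values}, only the equation $T=1$ remains. With $x=B\theta$, $y=C\theta$, it reads
\[
 2\sin^2 x\cos y=\sin(4\theta)\sin y .
\]
The converse direction is a direct check: for $(n,B,C)=(12,4,6)$ we have $\theta=\pi/24$, so $\cos y=\sin y$ and $2\sin^2(\pi/6)=1/2=\sin(\pi/6)$.

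For the forward direction I would first write the equation as a relation among roots of unity. Using $2\sin^2x=1-\cos 2x$ and product-to-sum formulas, and multiplying by $2$, the equation becomes
\[
 2\cos y-\cos(2x+y)-\cos(2x-y)-\cos(y-4\theta)+\cos(y+4\theta)=0 .
\]
Each cosine is half a sum of two $4n$-th roots of unity, so this is a vanishing sum of $12$ roots of unity, with coefficients $\pm1$ counted with multiplicity. This is the same weight-$12$ situation that Poonen--Rubinstein analyse. Equivalently, I would repackage the equation as an instance of Theorem~\ref{thm:PR}. To do this, rewrite $\sin(4\theta)=2\sin2\theta\cos2\theta$ and $\cos y=\sin(\pi/2-y)$. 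Then choose, for each factor, a representative angle in $(0,1)$ (dividing by $\pi$, and using $\sin\pi z=\sin\pi(1-z)$ and $\cos\pi z=\sin\pi(\tfrac12\pm z)$) so that the six angles $u_1,u_2,u_3;v_1,v_2,v_3$ sum to $1$. I expect the natural choice to be
\[
 u=\Bigl(\tfrac{B}{2n},\tfrac{B}{2n},\tfrac12-\tfrac{C}{2n}\Bigr),\qquad v=\Bigl(\tfrac1n,\tfrac12-\tfrac1n,\tfrac{C}{2n}\Bigr),
\]
possibly after one supplementary replacement that absorbs the surplus $B/n$ into the sum. If no such normalization makes the sum exactly $1$, I would instead work directly from the weight-$12$ relation via Poonen--Rubinstein's list of minimal vanishing sums. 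Finding this normalization, or verifying that the relation is genuinely of their type, is the first delicate point.

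With the equation in Poonen--Rubinstein form, I would test each type of solution against our constraints. These are that two $u$-entries are equal to $B/(2n)$, that one $v$-entry equals $1/n$ with its partner $\tfrac12-\tfrac1n$, together with $1\le B<C<n$, $C-B\ge2$, $n-C\ge2$ and $B\equiv C\equiv n\pmod 2$.
\begin{enumerate}[label=(\roman*)]
\item Trivial solutions require $\{u\}=\{v\}$ as multisets. This forces the two equal entries $B/(2n)$ to match two of $1/n$, $\tfrac12-\tfrac1n$, $C/(2n)$. Since $B<C$, the only options give $B=2$ with $1/n=\tfrac12-\tfrac1n$ (so $n=4$), or $B=n-2>C$. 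Both are impossible.
\item For each of the four one-parameter families in Table~3 of Poonen--Rubinstein, and each of the permitted permutations, the matching imposes affine equations in $1/n$, $B/n$, $C/n$ and the family parameter. I would solve these by hand. I expect them to yield either contradictions with the inequalities above or the single point $(12,4,6)$.
\end{enumerate}

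The main obstacle is the $65$ sporadic solutions. For each one, every assignment of its six angles to our slots must be checked against the shape constraints: a repeated $u$-entry, a $v$-pair summing to $\tfrac12$ with one member of the form $1/n$, and the parity conditions. This is a finite but tedious comparison, which I would delegate to an exact-arithmetic script in the style of \texttt{verify\_myerson.py} and record in Appendix~\ref{app:finite-matching}. I expect exactly one survivor, corresponding to $n=12$, $B=4$, $C=6$, i.e.\ $(n,k,m)=(12,1,4)$.
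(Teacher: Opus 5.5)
Your reduction to Theorem~\ref{thm:PR} fails at the step you flag as delicate. Your product identity is correct, but the sextuple $u=(\tfrac{B}{2n},\tfrac{B}{2n},\tfrac12-\tfrac{C}{2n})$, $v=(\tfrac1n,\tfrac12-\tfrac1n,\tfrac{C}{2n})$ has total $1+B/n$; for $(12,4,6)$ this is $4/3$. Every entry lies in $(0,\tfrac12)$, so each replacement $z\mapsto1-z$ changes the sum by $1-2z>0$ and only increases it. No choice of representatives of these six factors reaches $1$, and Poonen--Rubinstein cannot be applied to this factorization.

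The missing idea is to regroup the equation into a different pair of three-sine products. With $x=B\theta$, $y=C\theta$, $z=2\theta$,
\[
 \sin(x-z)\sin x\cos(y+z)-\cos(x-z)\sin z\sin(y-x)=\sin^2x\cos y-\sin z\cos z\sin y .
\]
Hence $T=1$ is equivalent, without dividing by any factor, to the three-sine equation for
\[
 (u;v)=\Bigl(\tfrac{B-2}{2n},\tfrac{B}{2n},\tfrac{n-C-2}{2n};\tfrac{n-B+2}{2n},\tfrac1n,\tfrac{C-B}{2n}\Bigr),
\]
whose entries sum to exactly $1$. Positivity of $u_1$ and $u_3$ needs two preliminary exclusions that your plan lacks:
\begin{itemize}
\item $B\le2$, where $T<\sin(2B\theta)/\sin(4\theta)\le1$ because $\cot$ is decreasing;
\item $n-C=2$, where $T\le\cos^2(4\theta)/\cos^2(2\theta)<1$.
\end{itemize}

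Your matching constraints (a repeated $u$-entry, a $v$-pair $\tfrac1n,\tfrac12-\tfrac1n$) and your trivial-solution analysis are therefore built on the wrong sextuple and must be redone. For the correct sextuple, the useful tests invariant under permutations are $u_1+v_1=\tfrac12$ and $u_2-u_1=v_2$. The cross-sum test $u_r+v_q=\tfrac12$ rules out all $65$ sporadic rows at once. The solution $(12,4,6)$ comes from the first one-parameter family at parameter $1/12$, not from a sporadic row as your last sentence suggests.

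Your fallback, analysing the weight-$12$ relation directly through the classification of minimal vanishing sums, is not wrong in principle, and your relation is correct. As stated, though, it is only a pointer. It would mean redoing the Poonen--Rubinstein case analysis for this particular relation, and you sketch none of that work.
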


\begin{proof}
The condition $T=1$ is
\[
  2\sin^2(B\theta)\cos(C\theta)=\sin(4\theta)\sin(C\theta).
  \tag{\ref{thm:Q1Opp}.2}
\]
First, if $B\le2$, then since $C>B$,
\[
  T<\frac{2\sin^2(B\theta)\cot(B\theta)}{\sin(4\theta)}
  =\frac{\sin(2B\theta)}{\sin(4\theta)}\le1,
\]
with equality not occurring. Hence $B>2$. If $n-C=2$, then $B\le C-2=n-4$ and
\[
 T=\frac{\sin^2(B\theta)}{\cos^2(2\theta)}
 \le\frac{\cos^2(4\theta)}{\cos^2(2\theta)}<1.
\]
Thus $n-C>2$ as well.

Put the following six positive rationals:
\[
  u_1=\frac{B-2}{2n},\quad
  u_2=\frac{B}{2n},\quad
  u_3=\frac{n-C-2}{2n},
\]
\[
  v_1=\frac{n-B+2}{2n},\quad
  v_2=\frac1n,
  \quad
  v_3=\frac{C-B}{2n}.
\]
These are positive and sum to $1$. Writing $x=B\theta$, $y=C\theta$ and $z=2\theta$, the difference between the two three-sine products is
\[
\begin{aligned}
 &\sin(x-z)\sin x\cos(y+z)
   -\cos(x-z)\sin z\sin(y-x)\\
 &\qquad=\sin^2x\cos y-\sin z\cos z\sin y.
\end{aligned}
\]
Twice this difference is $2\sin^2(B\theta)\cos(C\theta)-\sin(4\theta)\sin(C\theta)$. Hence, without division by any factor, equation~(\ref{thm:Q1Opp}.2) is equivalent to
\[
  \sin(\pi u_1)\sin(\pi u_2)\sin(\pi u_3)
  =
  \sin(\pi v_1)\sin(\pi v_2)\sin(\pi v_3).
\]
Apply Theorem~\ref{thm:PR}. A trivial solution requires equality of the following multisets, with multiplicities retained:
\[
  \{B-2,B,n-C-2\}=\{n-B+2,2,C-B\},
\]
but this contradicts the conditions. Indeed, since the right-hand side contains $2$, one of the entries of the left-hand side equals $2$. $B=2$ contradicts $B>2$. If $B-2=2$ then $B=4$, and the remaining two-entry multiset $\{4,n-C-2\}$ would have to equal $\{n-2,C-4\}$. Here $C\ge6$, while $n-C>2$ and the parity condition give $n-C\ge4$, so $n\ge10$. Thus $n-2>4$ and $n-2>n-C-2$, ruling out this equality. Finally, if $n-C-2=2$ then $C=n-4$, and the remaining comparison requires $2B=n+4$ and $2B=n-4$, or $2B=n-2$ and $2B=n+2$, simultaneously, both of which are contradictory.

Matching against the four one-parameter families of Poonen--Rubinstein's Table 3, allowing permutations within each triple and interchange of the two triples, the only one that remains is the first family's
\[
  (n,B,C)=(12,4,6).
\]
It matches none of the $65$ sporadic solutions of Table 4. The finite matching against both tables is recorded, with its verification procedure, in Appendix~\ref{app:finite-matching}. Conversely, for $(n,B,C)=(12,4,6)$ we have $t=(n-C)/2=3$, and direct substitution gives $T=1$. Hence
\[
  T=1\quad\Longleftrightarrow\quad (n,B,C)=(12,4,6).
\]

\end{proof}

\begin{proof}[Proof of Theorem~\ref{thm:Q1Opp}]
By Proposition~\ref{prop:a1-formula}, $a_1$ is rational if and only if $T$ is rational. Lemmas~\ref{lem:T-rational-values} and \ref{lem:T-one} give exactly the two configurations.

Returning to $(n,k,m)$: for the exception $(n,B,C)=(6,2,4)$, we have $l=2$, and from $2k+l=4$, $k=1$, $m=(6-2)/2=2$, so $(n,k,m)=(6,1,2)$. Here $T=1/3$, so
\[
  a_1=\frac tn-\frac{T}{n}=\frac16-\frac1{18}=\frac19.
\]
In the case $T=1$, $(n,B,C)=(12,4,6)$ gives $l=4$, $k=(C-B)/2=1$, $m=(12-4)/2=4$, so $(n,k,m)=(12,1,4)$. Here $t=3$ and $T=1$, so
\[
  a_1=\frac{3}{12}-\frac1{12}=\frac16.
\]
\end{proof}

\section{Proof of Theorem A and geometric interpretations}\label{sec:profile-proof}

\begin{proof}[Proof of Theorem~\ref{thm:profile}]
The four blocks of the table follow from Theorems~\ref{thm:Q0}, \ref{thm:Q2}, \ref{thm:Q1Opp}, and \ref{thm:Adj}, respectively. All remaining cases follow from the identities stated in Theorem~\ref{thm:profile} and complementation.
\end{proof}

\begin{corollary}[Simultaneous rationality of all four regions]\label{cor:all-rational}
\[
  a_0,a_1,a_2,a_3\in\Q
\]
hold simultaneously if and only if
\[
  (n,k,m)=(6,1,2).
\]
In this case
\[
  (a_0,a_1,a_2,a_3)=\left(\frac1{18},\frac19,\frac{13}{18},\frac19\right).
\]
\end{corollary}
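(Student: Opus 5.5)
The corollary follows by intersecting the rationality sets already determined. By Theorem~\ref{thm:profile}, the four conditions $a_0\in\Q$, $a_2\in\Q$ and $a_1=a_3\in\Q$ are equivalent to membership in $\mathcal E_0$, $\mathcal E_2$ and $\mathcal E_1$, respectively. These sets are given by Theorems~\ref{thm:Q0}, \ref{thm:Q2} and \ref{thm:Q1Opp}. Proposition~\ref{prop:a1a3} shows that $a_3$ adds no condition beyond $a_1$. So simultaneous rationality holds exactly on $\mathcal E_0\cap\mathcal E_1\cap\mathcal E_2$.

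To compute the intersection, start with the smallest set. $\mathcal E_1=\{(6,1,2),(12,1,4)\}$ by Theorem~\ref{thm:Q1Opp}. The triple $(12,1,4)$ is not among the five configurations in Theorem~\ref{thm:Q0}, since the only $n$ there are $6,18,30$. It is also not in $\mathcal E_2$: it is neither $(6,1,2)$ nor $(9,1,3)$, and the even family with $n=12$ is $(12,3,4)$. Either exclusion is enough to remove it.

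The triple $(6,1,2)$ lies in all three sets: it appears explicitly in the lists of Theorems~\ref{thm:Q0}, \ref{thm:Q2} and \ref{thm:Q1Opp}. Hence the intersection is exactly $\{(6,1,2)\}$. The values $a_0=1/18$, $a_2=13/18$ and $a_1=a_3=1/9$ are read off from those theorems. As a consistency check, they sum to $1$.

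No real obstacle remains, because all the arithmetic work is in the component classifications. The only care needed is to check correctly that $(12,1,4)$ is not a member of the infinite family in $\mathcal E_2$, which requires $k=n/2-3=3$.
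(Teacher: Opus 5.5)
Your proposal is correct and follows the same route as the paper's proof: you intersect $\mathcal E_0\cap\mathcal E_1\cap\mathcal E_2$ and read the values off the component theorems. Your explicit check that $(12,1,4)$ misses both $\mathcal E_0$ and the $n=12$ member $(12,3,4)$ of the even family in $\mathcal E_2$ simply spells out the inspection that the paper leaves implicit.
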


\begin{proof}
For all four regions to be rational, in particular $(n,k,m)$ must belong to $\mathcal E_0\cap\mathcal E_1\cap\mathcal E_2$. From the configurations in Theorem~\ref{thm:profile} this intersection is only $(6,1,2)$. For this configuration, the values from Theorem~\ref{thm:Q0}, Theorem~\ref{thm:Q2}, and Theorem~\ref{thm:Q1Opp} give the displayed quadruple.
\end{proof}

% Independently checked geometric interpretations.  These remarks can be
% inserted into the corresponding classification sections of main.tex.

\begin{remark}[Diagonals next to a diameter]\label{rem:adj-geometry}
The infinite family in $\mathcal E_{\mathrm{adj}}$ has a direct geometric
explanation.  Suppose that $n\ge6$ is even and $m=n/2-1$.
The diameter $V_0V_{n/2}$ divides $P_n$ into two equal halves, and
$Q_0\cup Q_1$ is the half with boundary $V_0V_1\cdots V_{n/2}$ with
the triangle $V_0V_mV_{n/2}$ removed.  This triangle has base $2$ and
altitude $\sin(2\pi/n)$, so its area is $2\Area(P_n)/n$.  Consequently
$a_0+a_1=1/2-2/n$, independently of $k$.
For the exceptional pair $(n,m)=(12,3)$, the chord $V_0V_2$ has length
equal to the circumradius, while $V_0V_{2m}$ is a diameter.  Hence their
length ratio $U(12,3)$ is $2$, giving $a_0+a_1=1/12$.
\end{remark}

\begin{remark}[A reflected central triangle]\label{rem:Q2-geometry}
Suppose that $n\ge8$ is even, $m=n/2-2$, and $k=n/2-3$;
equivalently, $l=4$ and $t=1$.  Put $E=V_m$ and
$F=V_{n-m+k}=V_{n-1}$, and let $O$ be the centre of $P_n$.
The minor central angle subtended by $EF$ is $\pi-2\pi/n$,
so the base angles of $\triangle EOF$ are $\pi/n$.
The base angles of $\triangle EXF$ are also $t\pi/n=\pi/n$.
Since $O$ and $X$ lie on opposite sides of $EF$, $X$ is the reflection
of $O$ in $EF$.  In particular, $XE=XF=1$ and
\[
  \frac{\Area(EXF)}{\Area(P_n)}
  =\frac{\Area(EOF)}{\Area(P_n)}=\frac1n.
\]
The boundary arc of $Q_2$ has $k+l=n/2+1$ edges.  Its cap, bounded by
this arc and $EF$, has area ratio
\[
  \frac{n/2+1}{n}
  -\frac{\sin(\pi+2\pi/n)}{n\sin(2\pi/n)}
  =\frac12+\frac2n.
\]
Adding the reflected triangle therefore gives $a_2=1/2+3/n$.
Figure~\ref{fig:infinite-families} illustrates both geometric
decompositions.
\end{remark}

\begin{figure}[htb]
  \centering
  \includegraphics{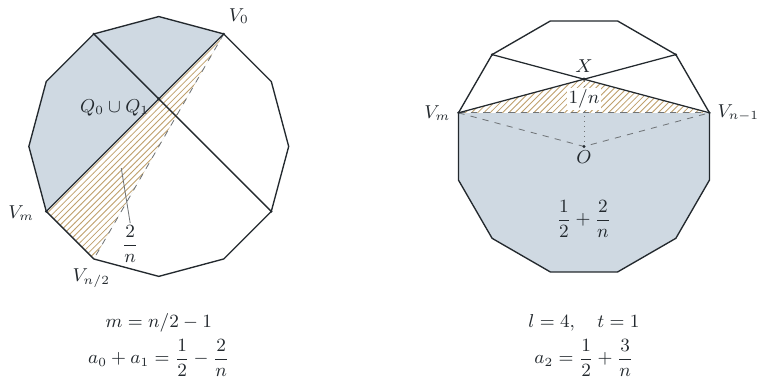}
  \caption{The infinite families illustrated in a regular dodecagon. Left: a half-polygon with a triangle of area ratio $2/n$ removed. Right: a cap of area ratio $1/2+2/n$ together with the reflection of a central triangle of area ratio $1/n$.}
  \label{fig:infinite-families}
\end{figure}

\section{Concluding remarks}\label{sec:conclusion}

The rationality profile shows that different regions in the same polygon dissection can have different arithmetic behavior. The finite classification for $Q_0$ and the two geometric families for $Q_2$ and adjacent sums make this distinction explicit. Congruence of the diagonals supplies the reflection symmetry and keeps the area reduction within short trigonometric relations.

Several related questions remain. One may ask for an analogous classification for two intersecting diagonals of unequal lengths, where the lateral areas need not agree. Dissections by three or more diagonals introduce additional incidence conditions and suggest a closer comparison with the concurrency problem studied by Poonen--Rubinstein. A further question is to classify configurations for which a ratio $a_i/a_j$ between two region areas is rational, even when neither area is a rational fraction of the whole polygon. These questions lie beyond the classification proved here.

As a separate consistency check, the verification program \texttt{verify\_geometry.py} \cite{Watanabe2026Code} evaluates every admissible configuration with $5\le n\le80$ at $60$-digit decimal precision. For all $19\,760$ configurations, areas computed from vertex coordinates, line intersections, and the shoelace formula agree with Proposition~\ref{prop:area-reduction}; the rational values listed in Theorem~\ref{thm:profile} also agree, within a tolerance of $10^{-50}$. These finite numerical checks are not used in the proof of completeness.

\backmatter

\section*{Statements and Declarations}

\bmhead*{Funding}
No funding was received for this work.

\bmhead*{Competing interests}
The authors declare no competing interests.

\bmhead*{Author contributions}
Shoichi Inoue proposed and formulated the research problem. Both authors contributed to the development of the mathematical arguments and proofs with assistance from generative AI tools. Keiichi Watanabe carried out the computational checks and prepared the initial draft, both with AI assistance. Both authors reviewed the manuscript and checked its mathematical correctness. Both authors approved the final version of the manuscript.

\bmhead*{Use of artificial intelligence}
During the preparation and revision of this manuscript, the authors used ChatGPT, Claude, and Gemini to obtain suggestions on manuscript organization and mathematical exposition, and to assist in identifying potential gaps and exploring revisions to individual proof arguments. These tools were also used to explore candidate configurations, suggest proof strategies, and assist in developing code for finite verification. The authors independently checked all mathematical arguments and references incorporated into the manuscript, revised the text as appropriate, and take full responsibility for the final content.

\bmhead*{Data availability}
The finite-matching data, verification programs, instructions, and recorded outputs are publicly archived on Zenodo as \emph{Polygon area profiles: verification code}, version~v1.0.0~\cite{Watanabe2026Code}. The exact checks use rational arithmetic; a separate program checks the area formulas and listed values numerically. Online Resource~1 additionally includes the manuscript source for table comparison.

\bmhead*{Supplementary information}
\textbf{Online Resource 1.} Exact verification of the Myerson and Poonen--Rubinstein finite matching and numerical checks of the polygon area formulas and classified values. The archive \texttt{ESM\_1.zip} contains the public verification code and data from version~v1.0.0~\cite{Watanabe2026Code}, together with the current manuscript source and instructions for the additional table comparison.

\begin{appendices}

\section{Finite matching for \texorpdfstring{$T=1$}{T=1}}\label{app:finite-matching}

We verify the comparison with Tables 3 and 4 of Poonen--Rubinstein~\cite{PoonenRubinstein1998} used in Theorem~\ref{thm:Q1Opp}. The target is
\[
 (u_1,u_2,u_3;v_1,v_2,v_3)=
 \left(\frac{B-2}{2n},\frac{B}{2n},\frac{n-C-2}{2n};
 \frac{n-B+2}{2n},\frac1n,\frac{C-B}{2n}\right).
\]
In particular, every matching must satisfy
\begin{equation}\label{eq:matching-tests}
 u_1+v_1=\frac12,\qquad u_2-u_1=v_2>0.
\end{equation}
Recover $n=1/v_2$, $B=2nu_2$ and $C=B+2nv_3$ for any surviving assignment, and check the entire target and the conditions of Theorem~\ref{thm:Q1Opp}.

\subsection{The four one-parameter families}

Write $s$ for the parameter called $t$ in Poonen--Rubinstein's Table 3, to distinguish it from the geometric variable $t=m-k$. In their order, the two triples are
\[
\begin{array}{c|c|c}
 & \text{First triple} & \text{Second triple}\\
\midrule
\mathrm{I} &(1/6,s,1/3-2s)&(1/3+s,s,1/6-s)\\
\mathrm{II} &(1/6,1/2-3s,s)&(1/6-s,2s,1/6+s)\\
\mathrm{III} &(1/6,1/6-2s,2s)&(1/6-2s,s,1/2+s)\\
\mathrm{IV} &(1/3-4s,s,1/3+s)&(1/6-2s,3s,1/6+s)
\end{array}
\]
The ranges are $0<s<1/6$ for I and II, and $0<s<1/12$ for III and IV. For each family, add each of the three entries of the first triple to each of the three entries of the second. Every constant cross-sum equals $1/6$, so none is identically $1/2$. Equating the nine affine cross-sums to $1/2$ therefore leaves only the finitely many parameter values listed in the second column below. Thus permutations and interchange of the triples are both covered.
\begin{center}
\begin{tabular}{@{}ccl@{}}
\toprule
Family & Parameters passing $u_1+v_1=1/2$ & Final matching\\
\midrule
I & $1/12$ & $(n,B,C,s)=(12,4,6,1/12)$\\
II & $1/24,\ 1/12$ & None\\
III & None & None\\
IV & $1/24$ & None\\
\bottomrule
\end{tabular}
\end{center}
For I at $s=1/12$, the only possible ordered pair is $(u_1,v_1)=(1/12,5/12)$: the reverse order leaves no larger entry for $u_2$. The second condition in \eqref{eq:matching-tests} then forces
\[
 (u_1,u_2,u_3;v_1,v_2,v_3)
 =(1/12,1/6,1/6;5/12,1/12,1/12).
\]
This is exactly the target for $(n,B,C)=(12,4,6)$, which satisfies all the required conditions.

For II at $s=1/24$, and IV at $s=1/24$, the triples, as multisets, are
\[
 \{1/24,1/6,3/8\},\qquad \{1/12,1/8,5/24\}.
\]
The only cross-pair summing to $1/2$ is $3/8+1/8$. Taking $u_1=3/8$ leaves no larger entry for $u_2$. Taking $u_1=1/8$ forces $u_2=5/24$ and hence $v_2=1/12$, which is absent from the other triple. For II at $s=1/12$, both triples are $\{1/12,1/6,1/4\}$, giving a trivial solution as noted by Poonen--Rubinstein; the only cross-pair is $1/4+1/4$, again leaving no $u_2>u_1$. This proves the stated family classification.

\subsection{The 65 sporadic solutions}

We use Table 4 on printed page 14 of the author-hosted version of Poonen--Rubinstein's paper~\cite{PoonenRubinstein1998}, dated November 18, 1997 and available at \url{https://math.mit.edu/~poonen/papers/ngon.pdf}. Number its rows from top to bottom as $j=1,\ldots,65$, without restarting at a change of denominator. The data file \texttt{pr-table4.csv} \cite{Watanabe2026Code} records these numerator triples and denominators, and \texttt{verify\_exact.py} recomputes every $d_j$ below as well as the four-family matching. For row $j$, let $D_j$ be its listed common denominator, with numerator triples $(a_1,a_2,a_3)$ and $(b_1,b_2,b_3)$. Define
\[
 d_j=\min_{1\le r,q\le3}\left|a_r+b_q-\frac{D_j}{2}\right|.
\]
All the listed denominators are even. A match requires $d_j=0$ by \eqref{eq:matching-tests}. The following table records $d_j$ for every row; within each range the values are in increasing row order.
\begin{center}
\begin{tabular}{@{}ccl@{}}
\toprule
Rows $j$ & $D_j$ & Values of $d_j$\\
\midrule
1--13 & 30 & $1,1,1,1,6,1,1,1,7,1,1,1,2$\\
14--19 & 42 & $1,1,1,1,2,6$\\
20--30 & 60 & $3,1,1,3,2,2,2,2,2,3,2$\\
31--41 & 60 & $2,9,5,1,1,1,1,5,1,1,3$\\
42--49 & 84 & $7,2,2,11,1,1,1,5$\\
50--53 & 90 & $1,3,3,7$\\
54--61 & 120 & $6,4,4,19,1,1,1,7$\\
62--65 & 210 & $3,2,7,27$\\
\bottomrule
\end{tabular}
\end{center}
Each entry requires only nine integer sums. For example, row 1 has numerator triples $(3,4,9)$ and $(4,5,5)$ with $D_1=30$. Their cross-sums form the set $\{7,8,9,13,14\}$, whose minimum distance from $15$ is $1$. Thus $d_1=1$. The same test gives the displayed positive value in each of the remaining 64 rows.

Each $d_j$ is positive and invariant under permutations within the triples or interchange of the triples. Hence none of the 65 sporadic solutions matches the target.

\end{appendices}

\begingroup
\interlinepenalty=10000 % Keep each reference entry on a single page.
\begin{CJK*}{UTF8}{ipxm}
\bibliography{references}
\end{CJK*}
\endgroup

\end{document}